\documentclass[11pt,a4paper]{article}

\usepackage[british]{babel}
\usepackage[margin=2.5cm]{geometry}
\usepackage{amsmath,amssymb,amsthm,mathtools}
\usepackage{xcolor}
\usepackage{enumitem}
\usepackage{hyperref}
\usepackage{microtype}

\hypersetup{
    colorlinks=true,
    linkcolor=blue!55!black,
    citecolor=blue!55!black,
    urlcolor=blue!55!black
}

\newtheorem{theorem}{Theorem}[section]
\newtheorem{lemma}[theorem]{Lemma}
\newtheorem{proposition}[theorem]{Proposition}
\newtheorem{corollary}[theorem]{Corollary}
\newtheorem{definition}[theorem]{Definition}
\newtheorem{conjecture}[theorem]{Conjecture}
\newtheorem{remark}[theorem]{Remark}

\title{Quasi-kernels in Hereditary Classes and Applications to Break Digraphs}
\author{Jiangdong Ai\thanks{School of Mathematical Sciences and LPMC, Nankai University. {\tt jd@nankai.edu.cn}.}
\hspace{2mm}
Tianyu Huang\thanks{School of Mathematical Sciences and LPMC, Nankai University. {\tt tyhuang@mail.nankai.edu.cn}.}
\hspace{2mm}
Xiangzhou Liu\thanks{School of Mathematical Sciences and LPMC, Nankai University. {\tt  
i19991210@163.com}.}
\hspace{2mm} Chaoliang Tang\thanks{Shanghai Center for Mathematics Science, Fudan University. {\tt cltang22@m.fudan.edu.cn}}
\hspace{2mm} 
Zongqun Xu\thanks{School of Mathematical Sciences and LPMC, Nankai University. {\tt xu\_zongqun@163.com}}
}

\begin{document}
\maketitle

\begin{abstract}
Recently,  Nguyen, Seymour and Scott verified the small quasi-kernel conjecture for split digraphs, and initiated the study of quasi-kernels in break digraphs. Following their research, we introduce a weighted half-neighborhood property for hereditary classes of oriented graphs and show that it gives a \(2n/3\) bound of small quasi-kernel for break digraphs.  We also
record two stronger \(n/2\) results for special classes of break digraphs.
Finally, using the same framework we also prove that every digraph on \(n\) vertices has a quasi-kernel \(Q\) with \(|N_D^+[Q]|\ge \sqrt n\).
\end{abstract}
\section{Introduction}

A \emph{quasi-kernel} of a digraph \(D\) is an independent set
\(Q\subseteq V(D)\) such that every vertex of \(D\) is reachable from some
vertex of \(Q\) by a directed path of length at most \(2\).
Quasi-kernels were introduced by Chv\'atal and Lov\'asz~\cite{CL}, who proved that
every finite digraph admits a quasi-kernel. Since then, quasi-kernels have been
studied extensively in connection with domination-type problems in directed
graphs.

One central direction in the area is to understand how small a quasi-kernel can be. P.L. Erd\H{o}s and Sz\'{e}kely made the following conjecture on the existence of small quasi-kernels. 

\begin{conjecture}[Small Quasi-kernel Conjecture \cite{ES}, 1976]\label{small}
If $D$ is a sink-free digraph, then $D$ has a quasi-kernel $Q$ with $|Q|\leq \frac{1}{2}|V(D)|$.
\end{conjecture}


Conjecture~\ref{small} is wide open: the best bound that works for all sink-free digraphs appears to be $|Q|\le n-\sqrt{n\log n}/4$ \cite{Spiro}. However, there have been substantial results that confirm that Conjecture~\ref{small} holds on certain classes of digraphs. 
Heard and Huang \cite{heard2008disjoint}  showed that a sink-free digraph $D$ has two disjoint quasi-kernels if $D$ is semicomplete multipartite, quasi-transitive, or locally semicomplete. As a consequence, Conjecture ~\ref{small} is true for these three classes of digraphs. Van Hulst \cite{Hulst2021KernelsAS} showed that Conjecture~\ref{small} holds for all digraphs containing kernels. Kostochka, Luo and Shan \cite{MR4477848} proved that Conjecture~\ref{small} holds for digraphs with chromatic number at most~4. In particular, Ai, Gerke, Gutin, Yeo and Zhou~\cite{AGGYZ}
verified the small quasi-kernel problem for one-way split
digraphs. More recently, Seymour, together with Nguyen and Scott~\cite{NSS} verified it for split digraphs, and also initiated
the study of quasi-kernels in \emph{break digraphs}. Recall that an
oriented graph \(D\) is a break digraph if its vertex set admits a partition
\[
V(D)=S\dot\cup T
\]
such that \(D[S]\) is acyclic and \(D[T]\) is a tournament.
Nguyen, Scott and Seymour proved a weighted large quasi-kernel theorem for
break digraphs, showing that for every nonnegative weight function
\(f:V(D)\to\mathbb Z_{\ge0}\), there exists a quasi-kernel \(Q\) satisfying
\[
f(N_D^+[Q])\ge \frac{f(V(D))}{2}.
\]
Their result suggests that weighted large quasi-kernel phenomena may extend far
beyond break digraphs themselves.

Motivated by this perspective, we develop a general hereditary framework for
weighted large quasi-kernels. The main idea is to isolate an abstract weighted
half-neighborhood property and study its structural consequences in hereditary
classes of oriented graphs. Our first main result is the following.

\begin{theorem}
Let \(\mathcal G\) be a hereditary class satisfying the weighted
half-neighborhood property. Then every source-free digraph
\(D\in\mathcal G\) on \(n\) vertices has a quasi-kernel \(Q\) such that
\[
|Q|\le \frac{2n}{3}.
\]
\end{theorem}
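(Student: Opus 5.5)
The plan is to reduce the small quasi-kernel bound to the weighted large quasi-kernel statement that the weighted half-neighborhood property provides. I take that property to give, for every \(D\in\mathcal G\) and every weight \(f:V(D)\to\mathbb Z_{\ge0}\), a quasi-kernel (or at least an independent set extendable to one inside \(\mathcal G\)) \(Q\) with \(f(N_D^+[Q])\ge f(V(D))/2\). Hereditarity will let me apply this property to induced subdigraphs. The target inequality is \(3|Q|\le n\cdot 2\), which I will obtain by comparing several candidate quasi-kernels, or one quasi-kernel under a cleverly chosen weight.

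First step: fix a source-free \(D\in\mathcal G\) and use Chv\'atal--Lov\'asz to get some quasi-kernel \(Q_0\). Since \(D\) is source-free, every vertex \(q\in Q_0\) has an in-neighbour, and by independence that in-neighbour lies outside \(Q_0\). This gives a partition \(V(D)=Q_0\cup A\cup B\), where \(A=N^+(Q_0)\) and \(B\) consists of the vertices at distance exactly \(2\) from \(Q_0\). If \(|Q_0|\le 2n/3\) we are done, so assume \(Q_0\) is large. Then \(|A\cup B|<n/3\), and the in-neighbours of \(Q_0\) form a small set \(P\subseteq A\cup B\) dominating \(Q_0\).

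Second step: build an alternative quasi-kernel that uses \(P\). Consider the induced subdigraph \(D'=D[P\cup Q_0]\) (or \(D[A\cup B\cup Q_0]\) with suitable weights), which lies in \(\mathcal G\). Apply the weighted property with weight \(f=1\) on \(Q_0\) and \(f=0\) elsewhere, possibly weighting \(B\) as well. This yields an independent set \(Q_1\subseteq P\cup Q_0\) whose closed out-neighbourhood captures at least half of the weight on \(Q_0\).

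Third step: repair \(Q_1\) into a quasi-kernel \(Q\) of \(D\). Add a maximal independent set of the vertices of \(Q_0\) not yet reached, together with the standard Chv\'atal--Lov\'asz closing step. Then bound \(|Q|\le |P|+\tfrac12|Q_0|\lesssim \tfrac n3+\tfrac12(n-\cdots)\). Finally, average the two bounds \(|Q_0|\) and \(|Q|\) to get \(\min\le 2n/3\); the constant \(2/3\) should come exactly from balancing \(x\) against \(\tfrac{n-x}{1}+\tfrac x2\).

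The main obstacle is the repair step. Adding vertices to restore independence and distance-\(2\) reachability may destroy the size gain, since vertices of \(B\) could lose their length-\(2\) paths. I would first try to choose the weights so that \(B\cup(Q_0\setminus N^+[Q_1])\) is handled by an inductive application on a smaller induced subdigraph, using hereditarity. That subdigraph must itself be source-free; if it is not, I would instead apply the property to the reversal-free piece and charge its sources to distinct vertices of \(P\).
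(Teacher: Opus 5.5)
\textbf{There is a genuine gap.} Your final balancing is the one the paper uses: one quasi-kernel of size $x$ and another of size at most $n-x/2$ give $\min\le 2n/3$. But the construction of the second quasi-kernel, which you flag as the main obstacle, is the whole proof, and your sketch does not supply it.

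Concretely, putting weight $1$ on $Q_0$ inside $D[P\cup Q_0]$ buys nothing. Vertices of $Q_1$ count towards $f(N^+[Q_1])$, so the property can be met by a $Q_1$ containing most of $Q_0$, and you get no bound of the form $|P|+\tfrac12|Q_0|$. The saving appears only if the weighted vertices are deleted before applying the property and their weight is moved to an in-neighbour. This is the paper's sink proposition. For the set $Z$ of non-source sinks of $H\in\mathcal G$, choose an in-neighbour $a_v$ for each $v\in Z$ and set $f(a)=|\{v:a_v=a\}|$ on $H-Z$. The property gives a quasi-kernel $K_A$ of $H-Z$ with $f(N^+[K_A])\ge |Z|/2$. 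Add back only the at most $|Z|/2$ sinks not reached in two steps; this yields a quasi-kernel of size at most $|V(H)|-\lceil |Z|/2\rceil$.

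This trick needs the discarded vertices to be sinks, and vertices of $Q_0$ are not. They have out-neighbours in $A=N^+(Q_0)$ and may have in-neighbours in $A$. Dropping them can leave $A$, and then $B$, unreached, and adding in-neighbours from $P\cap A$ can break independence with the retained part of $Q_0$. Your repair step has no mechanism for either problem.

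The missing idea is how to isolate a part of the first quasi-kernel that behaves like sinks. The paper proceeds as follows.
\begin{enumerate}
\item Take $K$ inclusion-minimal, and set $N=N^+(K)$ and $M=V(D)\setminus N^+[K]$.
\item Split $K$ by a maximum matching between $K$ and $N$. The matched part $K_1$ satisfies $|K_1|\le|N|$, and every vertex of $N$ has an in-neighbour in $K_1$. The unmatched part is $K_2$.
\item Minimality forces every in-neighbour of $K_2$ to lie in $M$. Since $D$ is source-free, $K_2$ consists of sinks that are not sources in $D[K_2\cup M]$.
\item The sink proposition on $D[K_2\cup M]$ gives $K'$ with $|K'|\le |M|+|K_2|/2$.
\item Set $K^*=K'\cup(K_1\setminus N^+(K'))$. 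It is independent because $M$ receives no arc from $K$, and it reaches $N$ through $K_1$.
\end{enumerate}
Its size is at most $|K_1|+|M|+|K_2|/2\le n-|K|/2$, which is exactly the bound your balancing needs. No induction and no source-freeness of subdigraphs is required: the weighted property applies to every member of $\mathcal G$, and only the sinks fed into the proposition must be non-sources.
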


The proof combines the weighted theorem of Nguyen--Scott--Seymour with a new
reduction involving sinks and a matching argument. Since the class of break
digraphs is hereditary, their theorem immediately yields the following
consequence.

\begin{corollary}
Every source-free break digraph on \(n\) vertices has a quasi-kernel of size at
most \(2n/3\).
\end{corollary}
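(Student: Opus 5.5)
The plan is to obtain the corollary directly from the preceding theorem. For this we need to check two hypotheses for the class \(\mathcal B\) of break digraphs: that \(\mathcal B\) is hereditary, and that it satisfies the weighted half-neighborhood property. Once both hold, the theorem applies to every source-free \(D\in\mathcal B\) on \(n\) vertices and gives a quasi-kernel \(Q\) with \(|Q|\le 2n/3\).

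\emph{Heredity.} Let \(D\) be a break digraph with partition \(V(D)=S\dot\cup T\), where \(D[S]\) is acyclic and \(D[T]\) is a tournament. Let \(X\subseteq V(D)\) be arbitrary, and use the partition \(X=(S\cap X)\dot\cup(T\cap X)\).
\begin{itemize}
\item \(D[S\cap X]\) is an induced subdigraph of an acyclic digraph, so it is acyclic.
\item \(D[T\cap X]\) is an induced subdigraph of a tournament, so it is a tournament (possibly empty or a single vertex).
\item \(D[X]\) is still an oriented graph, since it has no digons.
\end{itemize}
Hence \(D[X]\in\mathcal B\). Degenerate cases where one part is empty are allowed by the definition, so nothing further is needed here.

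\emph{Weighted half-neighborhood property.} We invoke the Nguyen--Scott--Seymour theorem quoted in the introduction. For every break digraph \(D\) and every \(f:V(D)\to\mathbb Z_{\ge0}\), it gives a quasi-kernel \(Q\) with \(f(N_D^+[Q])\ge f(V(D))/2\). This is the weighted half-neighborhood property, applied to each member of the class.

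The only real point of care is that the abstract property in the paper may be phrased with a different orientation convention than the NSS statement. It might use in-neighborhoods, be stated for the reverse digraph, or restrict to real rather than integer weights. Reversing all arcs preserves the class \(\mathcal B\): an acyclic digraph stays acyclic and a tournament stays a tournament. So any direction mismatch is resolved by applying NSS to the reverse of \(D\). If the property asks for real or rational weights, we first scale rational weights to integers, then extend to real weights by approximation; the set of quasi-kernels is finite, so a limiting choice of \(Q\) works.

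With both hypotheses verified, the theorem yields the corollary immediately. I expect no genuine obstacle beyond this bookkeeping about the exact form of the property.
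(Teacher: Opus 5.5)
Your proposal is correct and is exactly the paper's argument: break digraphs form a hereditary class (Lemma~\ref{lem:hereditary-break}), they have the weighted half-neighborhood property by Theorem~\ref{thm:NSS}, and Theorem~\ref{thm:general-2n3} then gives the bound. Your worries about orientation conventions and real weights are unnecessary, since the property is stated with integer weights and out-neighborhoods exactly as in Theorem~\ref{thm:NSS}; the only bookkeeping the paper actually records is extending the positive-weight version of that theorem to nonnegative weights by applying it to \(Nf+1\).
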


We also investigate more structured subclasses of break digraphs. In
particular, we prove that one-way break digraphs admit quasi-kernels of size at
most \(n/2\), and we obtain another \(n/2\) theorem under a small out-kernel
assumption on the acyclic side of a break.

Besides small quasi-kernels, another important direction concerns the existence
of quasi-kernels with large closed out-neighborhoods. This viewpoint is closely
related to the \emph{Large Quasi-kernel Conjecture}, studied by
Spiro~\cite{Spiro}, which predicts that every digraph \(D\) has a quasi-kernel
\(Q\) satisfying
\[
|N_D^+[Q]|\ge \frac{|V(D)|}{2}.
\]
Spiro proved that every digraph contains a quasi-kernel \(Q\) with
\[
|N_D^+[Q]|\ge |V(D)|^{1/3}.
\]

Our second main contribution gives a substantial improvement of this general
bound. We prove a general theorem for hereditary classes, from
which we derive the following consequence.

\begin{theorem}
Every digraph \(D\) on \(n\) vertices has a quasi-kernel \(Q\) such that
\[
|N_D^+[Q]|\ge \sqrt n .
\]
\end{theorem}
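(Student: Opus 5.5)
The plan is to combine a counting argument, which handles digraphs of small maximum out-degree, with the Chv\'atal--Lov\'asz recursive construction, which handles a vertex of large out-degree. Throughout, let \(\Delta\) denote the maximum out-degree of \(D\).

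\textbf{Step 1 (small \(\Delta\)).} Let \(Q\) be any quasi-kernel and put \(M=N_D^+[Q]\). By definition every vertex of \(D\) lies in \(M\) or is an out-neighbour of some vertex of \(M\). Hence
\[
n\le |M|(1+\Delta),
\]
that is, \(|M|\ge n/(\Delta+1)\). So if \(\Delta+1\le\sqrt n\), any quasi-kernel, which exists by Chv\'atal--Lov\'asz, already satisfies \(|N_D^+[Q]|\ge\sqrt n\).

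\textbf{Step 2 (large \(\Delta\)).} Now suppose \(\Delta+1>\sqrt n\), and pick \(v\) with \(d^+(v)=\Delta\). I would run the Chv\'atal--Lov\'asz step at \(v\). Let \(D'=D-N_D^+[v]\) and take a quasi-kernel \(Q'\) of \(D'\).
\begin{itemize}[leftmargin=*]
\item If no vertex of \(Q'\) sends an arc to \(v\), then \(Q=Q'\cup\{v\}\) is independent. This is because the out-neighbours of \(v\) were deleted. It is also a quasi-kernel of \(D\), and \(N_D^+[Q]\supseteq N_D^+[v]\) has size \(\Delta+1>\sqrt n\), so we are done.
\item Otherwise some \(q\in Q'\) satisfies \(q\to v\). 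Then \(Q'\) itself is a quasi-kernel of \(D\), since \(N_D^+[v]\) is reached from \(q\) within two steps. However, we only learn that \(N_D^+[Q']\) contains \(N_{D'}^+[Q']\cup\{v\}\).
\end{itemize}

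\textbf{Step 3 (the second case).} To handle this case I would strengthen the statement into a hereditary, induction-friendly form. This is presumably the ``general theorem for hereditary classes'' mentioned above. The idea is to prove, by induction on \(n\) over all induced subdigraphs, that there is a quasi-kernel \(Q\) with \(|N^+[Q]|\ge\sqrt n\) \emph{or} one containing a prescribed vertex of maximum out-degree.

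In the bad case I would not settle for \(Q'\). Instead, I would replace \(v\) by the offending vertex \(q\) and repeat: delete \(N^+[q]\), and recurse. Each time the recursion produces a blocking in-neighbour, the offending vertices form a walk backwards along arcs. I would argue that this process must terminate at a vertex \(w\) for which the first case applies, so that \(w\) can be added to the quasi-kernel. For the second case to be unavoidable, \(v\) must be reached within two steps from each offending vertex. I would then aim to show that either the union of the deleted closed out-neighbourhoods lies in \(N^+[Q]\), or the counting of Step 1 applies to the undeleted part. The target inequality is of the shape \(\sqrt{n-k}+k/\sqrt n\ge\sqrt n\)-type bookkeeping, where \(k\) is the number of deleted vertices, using \(\sqrt{a}+\sqrt{b}\ge\sqrt{a+b}\).

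\textbf{Main obstacle.} The crux is the second case of Step 2. Chv\'atal--Lov\'asz gives no control over which vertex ends up in \(Q\), so the large out-neighbourhood of \(v\) may be covered only at distance \(2\). What I would try first is a weighted version of the induction, in the spirit of the Nguyen--Scott--Seymour weighted theorem. The weight of a vertex would record how many deleted vertices it ``pays for'', so that in the bad case the out-neighbourhood of \(v\) is charged to \(q\)'s contribution. I would then need to check that the dichotomy \(\Delta+1\lessgtr\sqrt n\) still balances at \(\sqrt n\).
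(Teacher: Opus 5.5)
\paragraph{Gap: the bad case of Step 2 is never handled.} Your Steps 1 and 2 are correct, but the argument stops exactly at the difficulty. The second case of Step 2 is never resolved, and Step 3 is a list of intentions rather than a proof.

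The choice of $v$ with \emph{maximum} out-degree is what makes that case hopeless. You delete $\Delta+1>\sqrt n$ vertices but are guaranteed to gain only the single vertex $v$. Even if you apply induction to $D'$, you get just $|N_D^+[Q']|\ge\sqrt{n-\Delta-1}+1$, which is far below $\sqrt n$ when $\Delta$ is large.

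The backward walk does not rescue this. Once an offending vertex $q$ (or a later $w$) goes into the quasi-kernel instead of $v$, the set $N_D^+(v)$ is reached only at distance $2$ and drops out of the closed out-neighbourhood. Meanwhile $q$ or $w$ may have tiny out-degree. You give no termination or counting argument, and the bookkeeping inequality is not derived from anything.

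The reformulated induction statement does not help either. As a disjunction it is equivalent to the target: a quasi-kernel containing $v$ already has $|N_D^+[Q]|\ge\Delta+1$, and Step 1 handles $\Delta+1\le\sqrt n$. As a demand for a quasi-kernel containing a prescribed $v$, it is false in general. If $v$ has a source as an in-neighbour, that source lies in every quasi-kernel, so $v$ lies in none.

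\paragraph{The missing idea.} The paper's proof (the case $c=0$ of its amplification theorem) recurses on what an \emph{arbitrary} quasi-kernel fails to cover, and then glues. Take any quasi-kernel $Q$. If $|N_D^+[Q]|<\sqrt n$, put $Q_2=V(D)\setminus N_D^+[Q]$, so $|Q_2|>n-\sqrt n$. Apply induction to $D[Q_2]$ to obtain $Q_0$ with $|N^+_{D[Q_2]}[Q_0]|\ge\sqrt{|Q_2|}$.

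Then $Q'=Q_0\cup(Q\setminus N_D^+(Q_0))$ is a quasi-kernel of $D$:
\begin{itemize}
\item it is independent, because $Q$ sends no arcs into $Q_2$ and the vertices of $Q$ hit by $Q_0$ were removed;
\item every vertex of $Q$ is within distance $1$ of $Q'$, hence $N_D^+(Q)$ is within distance $2$;
\item $Q_2$ is covered by $Q_0$.
\end{itemize}
Since $N_D^+[Q']\supseteq N^+_{D[Q_2]}[Q_0]\cup Q$ with the two parts disjoint, $|N_D^+[Q']|\ge\sqrt{n-\sqrt n}+1\ge\sqrt n$.

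\paragraph{Repairing your own route.} Your Chv\'atal--Lov\'asz framework can also be made to work, but only by running Step 2 at a vertex $x$ of \emph{small} out-degree, $d^+(x)+1<\sqrt n$. Use the target statement itself as the induction hypothesis on $D-N_D^+[x]$.
\begin{itemize}
\item The first case gives $\sqrt{n-d^+(x)-1}+d^+(x)+1\ge\sqrt n$.
\item The second case gives more than $\sqrt{n-\sqrt n}+1\ge\sqrt n$.
\item If no such $x$ exists, any quasi-kernel works, since any one of its vertices alone has closed out-neighbourhood of size at least $\sqrt n$.
\end{itemize}
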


The paper is organized as follows. In Section~2, we introduce notation and
basic definitions. In Section~3, we develop the weighted half-neighborhood
framework and establish a general sinks proposition. Section~4 contains the
general \(2n/3\) theorem and its application to break digraphs. In Section~5,
we study two special subclasses of break digraphs admitting stronger
\(n/2\) bounds. Finally, Section~6 contains the
\(\sqrt n\) lower bound for large quasi-kernels.
\section{Preliminaries}

All digraphs in this paper are finite and loopless. For a digraph \(D\), we
write \(V(D)\) and \(A(D)\) for its vertex set and arc set, respectively. If
\(X\subseteq V(D)\), then \(D[X]\) denotes the subdigraph of \(D\) induced by
\(X\).

For a vertex \(v\in V(D)\), let
\[
    N_D^+(v)=\{u\in V(D): vu\in A(D)\}
    \quad\text{and}\quad
    N_D^-(v)=\{u\in V(D): uv\in A(D)\}.
\]
For a set \(X\subseteq V(D)\), let
\[
    N_D^+(X)=
    \{u\in V(D)\setminus X:\text{ there exists }x\in X\text{ with }xu\in A(D)\},
\]
and define the closed out-neighborhood of \(X\) by
\[
    N_D^+[X]=X\cup N_D^+(X).
\]
Similarly, one may define \(N_D^-(X)\) and \(N_D^-[X]\).

A vertex \(v\) is called a source if \(d_D^-(v)=0\), and a sink if
\(d_D^+(v)=0\). A digraph is source-free if it has no source.

A set \(X\subseteq V(D)\) is independent if there is no arc between any two
vertices of \(X\). A \emph{quasi-kernel} of \(D\) is an independent set
\(K\subseteq V(D)\) such that every vertex of \(D\) is reachable from some
vertex of \(K\) by a directed path of length at most 2.

For a weight function \(f:V(D)\to \mathbb Z_{\ge0}\) and a set
\(X\subseteq V(D)\), we write
\[
    f(X)=\sum_{x\in X} f(x).
\]

An \emph{out-kernel} of a digraph \(D\) is an independent set \(K\subseteq V(D)\)
such that \(V(D)=N_D^+[K]\). Equivalently, every vertex outside \(K\) has an
in-neighbor in \(K\). This is a kernel of the reverse digraph \(D^{\mathrm{rev}}\).

A vertex \(v\) of a digraph \(D\) is a \emph{king} if every vertex of \(D\) is
reachable from \(v\) by a directed path of length at most \(2\).

\begin{lemma}\label{lem:tournament-king}
Every nonempty tournament has a king.
\end{lemma}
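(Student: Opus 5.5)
The natural route is the classical extremal argument: a vertex of maximum out-degree in a tournament is a king. Let \(T\) be a nonempty tournament and choose \(v\in V(T)\) with \(d_T^+(v)\) maximum. Since \(T\) is finite and nonempty, such a \(v\) exists. We claim that every vertex \(u\) of \(T\) is reachable from \(v\) by a directed path of length at most \(2\).

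We argue by contradiction. Take \(u\ne v\) and suppose \(u\) is not reachable from \(v\) by a path of length at most \(2\). The case \(u=v\) is trivial, via the path of length \(0\).
\begin{enumerate}[label=(\roman*)]
\item Since \(T\) is a tournament and \(vu\notin A(T)\), we must have \(uv\in A(T)\).
\item For every \(w\in N_T^+(v)\) we have \(wu\notin A(T)\), since otherwise \(v\to w\to u\) would be a path of length \(2\). As \(w\ne u\) (because \(u\notin N_T^+(v)\)), the tournament property forces \(uw\in A(T)\).
\end{enumerate}
Hence \(N_T^+(u)\supseteq N_T^+(v)\cup\{v\}\). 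Moreover \(v\notin N_T^+(v)\), so
\[
d_T^+(u)\ge d_T^+(v)+1,
\]
contradicting the maximality of \(d_T^+(v)\). Therefore \(v\) is a king.

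There is no real obstacle here. The only points needing care are the use of the tournament property, namely that exactly one arc joins each pair of distinct vertices (at least one arc is what is needed), and the observation that \(v\) is a new out-neighbour of \(u\) not already counted in \(N_T^+(v)\). This yields the strict inequality.
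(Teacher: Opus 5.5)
Your proof is correct and is essentially the paper's argument: take a vertex \(v\) of maximum out-degree, and note that a vertex \(u\) not reached within two steps would have \(N_T^+(v)\cup\{v\}\subseteq N_T^+(u)\), contradicting maximality. The difference is only in phrasing: you set up a single contradiction, while the paper first deduces \(u\to v\) and then concludes that some \(w\in N_T^+(v)\) must satisfy \(w\to u\).
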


\begin{proof}
Let \(v\) be a vertex of maximum out-degree in a tournament \(T\). If
\(u\notin N_T^+[v]\), then \(u\to v\). If every vertex of \(N_T^+(v)\) were also
an out-neighbor of \(u\), then \(u\) would have larger out-degree than \(v\), a
contradiction. Hence some \(w\in N_T^+(v)\) satisfies \(w\to u\), and so
\(v\to w\to u\). Thus \(v\) is a king.
\end{proof}

We will use the following classical theorem of Chv\'atal and Lov\'asz.

\begin{theorem}[Chv\'atal--Lov\'asz]\label{thm:CL}
Every finite digraph has a quasi-kernel.
\end{theorem}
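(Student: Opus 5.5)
The statement immediately above is Theorem~\ref{thm:CL}, the Chv\'atal--Lov\'asz theorem that every finite digraph has a quasi-kernel. I will prove it by induction on \(|V(D)|\). The empty digraph has the empty set as a quasi-kernel, so assume \(D\) is nonempty.

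The plan is to peel off one vertex together with its closed out-neighbourhood and recurse.

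\begin{enumerate}[label=(\roman*)]
\item Choose a vertex \(v\) and set \(D' = D - N_D^+[v]\). By induction, \(D'\) has a quasi-kernel \(Q'\).
\item \emph{Case 1: no vertex of \(Q'\) sends an arc to \(v\).} Put \(Q = Q' \cup \{v\}\).
\begin{enumerate}[label=(\alph*)]
\item Independence: \(Q'\) is independent and contains no out-neighbour of \(v\), and no arc goes from \(Q'\) to \(v\). Hence \(Q\) is independent.
\item Covering: each vertex of \(N_D^+[v]\) is reached from \(v\) within distance \(1\). Each vertex of \(D'\) is reached from \(Q'\) within distance \(2\) by paths inside \(D'\), and these remain paths in \(D\).
\end{enumerate}
\item \emph{Case 2: some \(q \in Q'\) has \(q \to v\).} Then \(q\) reaches every vertex of \(N_D^+[v]\) within two steps, via \(q \to v \to w\). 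So \(Q'\) itself is a quasi-kernel of \(D\).
\end{enumerate}

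The only worry is whether independence could fail in Case~1. It cannot, because \(Q' \subseteq V(D')\) avoids \(N_D^+[v]\) entirely, which rules out arcs from \(v\) into \(Q'\). Since no choice of \(v\) is ever needed, the argument works for arbitrary \(v\), and the induction closes without any extra hypothesis on \(D\).

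I expect no real obstacle. The one point to check carefully is that distances measured in the induced subdigraph \(D'\) transfer to \(D\), which holds because \(D'\) is an induced subdigraph.
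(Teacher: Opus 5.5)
Your argument is correct and complete: the paper gives no proof of this theorem and only cites Chv\'atal and Lov\'asz. Your induction (delete \(N_D^+[v]\), then either add \(v\), or observe that an arc \(q\to v\) from \(Q'\) already reaches all of \(N_D^+[v]\) within two steps) is the classical one-paragraph proof of that result.
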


An oriented graph is a digraph with no pair of opposite arcs. A tournament is an
orientation of a complete graph. A digraph is acyclic if it has no directed
cycle.

\begin{definition}[Break digraphs]
An oriented graph \(D\) is called a \emph{break digraph} if its vertex set can
be partitioned as
\[
    V(D)=S\dot\cup T
\]
such that \(D[S]\) is acyclic and \(D[T]\) is a tournament. We call such a
partition a \emph{break} of \(D\).
\end{definition}

The following simple observation will be used repeatedly.

\begin{lemma}\label{lem:hereditary-break}
Every induced subdigraph of a break digraph is again a break digraph.
\end{lemma}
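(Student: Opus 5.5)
The plan is to take an arbitrary induced subdigraph \(D' = D[X]\) of a break digraph \(D\), with \(X\subseteq V(D)\), and exhibit a break of \(D'\) directly. We do this by restricting a break of \(D\).

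Fix a break \(V(D)=S\dot\cup T\) of \(D\). Set \(S'=S\cap X\) and \(T'=T\cap X\). Then \(X=S'\dot\cup T'\) is a partition of \(V(D')\). We check the three required properties in turn.

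\begin{enumerate}[label=(\roman*)]
\item \(D'\) is an oriented graph. A pair of opposite arcs in \(D'\) would also be a pair of opposite arcs in \(D\), which is impossible.
\item \(D'[S']=D[S']\) is acyclic. Any directed cycle in it would be a directed cycle in \(D[S]\).
\item \(D'[T']=D[T']\) is a tournament. It is an induced subdigraph of the tournament \(D[T]\), so every pair of distinct vertices of \(T'\) is joined by exactly one arc.
\end{enumerate}

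One edge case needs attention: \(S'\) or \(T'\) may be empty. The empty digraph is trivially acyclic, and it is also (vacuously) a tournament. So we adopt the convention, implicit in the definition of a break, that either side may be empty. With that convention, \(S'\dot\cup T'\) is a break of \(D'\), and \(D'\) is a break digraph.

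There is no real obstacle here. The only point requiring care is the empty-part convention, which we state explicitly so that the hereditary property holds for all induced subdigraphs.
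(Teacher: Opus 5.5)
Your proof is correct and matches the paper's: both restrict the break to \(X\) by taking \(S\cap X\) and \(T\cap X\). Your extra checks, that \(D[X]\) is still oriented and that either part may be empty, are details the paper leaves implicit.
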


\begin{proof}
Let \(D\) be a break digraph with break \(V(D)=S\dot\cup T\), where \(D[S]\) is
acyclic and \(D[T]\) is a tournament. For \(X\subseteq V(D)\), we have
\[
    X=(X\cap S)\dot\cup (X\cap T).
\]
The induced subdigraph \(D[X\cap S]\) is acyclic, and \(D[X\cap T]\) is a
tournament. Hence \(D[X]\) is again a break digraph.
\end{proof}

We will also use the following notion in Section~\ref{sink-version}.

\begin{definition}[Blow-up]
Let \(H\) be an oriented graph. A \emph{blow-up} of \(H\) is obtained by
replacing each vertex \(v\in V(H)\) with a nonempty independent set \(B_v\), and
by replacing each arc \(uv\in A(H)\) with all arcs from \(B_u\) to \(B_v\). Thus
\[
    V(G)=\dot\bigcup_{v\in V(H)}B_v,
\]
and for distinct \(u,v\in V(H)\), all arcs between \(B_u\) and \(B_v\) are
directed from \(B_u\) to \(B_v\) if \(uv\in A(H)\), from \(B_v\) to \(B_u\) if
\(vu\in A(H)\), and there are no arcs between \(B_u\) and \(B_v\) if neither
arc is present in \(H\).
\end{definition}

\section{Weighted large quasi-kernels and sinks}\label{sink-version}

The main idea of this paper is to study hereditary classes satisfying a weighted large quasi-kernel property. In this section, we introduce an abstract weighted half-neighborhood condition and show that it yields strong structural consequences. In particular, we establish a general proposition relating weighted large quasi-kernels to the existence of sinks, which will serve as the key ingredient in the later \(2n/3\) theorem.

\begin{definition}
A hereditary class \(\mathcal G\) of oriented graphs is said to satisfy the
\emph{weighted half-neighborhood property} if for every \(H\in\mathcal G\) and
every function \(f:V(H)\to\mathbb Z_{\ge0}\), there exists a quasi-kernel
\(Q\) of \(H\) such that
\[
    f(N_H^+[Q])\ge \frac{f(V(H))}{2}.
\]
\end{definition}

The following proposition will be the main tool in the next section.

\begin{proposition}\label{prop:sinks-general}
Let \(\mathcal G\) be a hereditary class satisfying the weighted
half-neighborhood property. Let \(D\in\mathcal G\), and let
\[
    Z=\{v\in V(D): v \text{ is a sink and not a source in }D\},
    \qquad z=|Z|.
\]
Then \(D\) has a quasi-kernel \(K\) such that
\[
    |K|\le |V(D)|-\left\lceil\frac z2\right\rceil .
\]
\end{proposition}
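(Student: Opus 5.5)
The plan is to delete the sinks in \(Z\), apply the weighted half-neighborhood property to what remains, and then add back only those sinks that are not already reached. The weight will be chosen so that at least half of \(Z\) is guaranteed to be reached.

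\textbf{Setup.} Let \(n=|V(D)|\). If \(z=0\), any quasi-kernel given by Theorem~\ref{thm:CL} works, so assume \(z\ge 1\). Put \(H=D[V(D)\setminus Z]\); then \(H\in\mathcal G\) because \(\mathcal G\) is hereditary. Each \(v\in Z\) is not a source, so we may fix an in-neighbour \(p(v)\) of \(v\). This \(p(v)\) has an out-arc, so it is not a sink, and hence \(p(v)\notin Z\). In particular \(V(H)\neq\emptyset\), and \(p\) is a map from \(Z\) to \(V(H)\).

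\textbf{Applying the property.} Define \(f:V(H)\to\mathbb Z_{\ge 0}\) by \(f(u)=|p^{-1}(u)|\), so that \(f(V(H))=z\). The weighted half-neighborhood property gives a quasi-kernel \(Q\) of \(H\) with \(f(N_H^+[Q])\ge z/2\). Let \(Z_1=\{v\in Z: p(v)\in N_H^+[Q]\}\). Then \(|Z_1|=f(N_H^+[Q])\ge z/2\), and since \(|Z_1|\) is an integer, \(|Z_1|\ge\lceil z/2\rceil\). Every \(v\in Z_1\) is reachable in \(D\) from \(Q\) by a path of length at most \(2\): first go from \(Q\) to \(p(v)\) in at most one step, then take the arc \(p(v)v\).

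\textbf{Extending to \(D\).} Let \(Z_2\) be the set of vertices of \(Z\) that are not reachable from \(Q\) in \(D\) by a path of length at most \(2\), and set \(K=Q\cup Z_2\). Note that \(Z_2\cap Z_1=\emptyset\).

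\begin{itemize}
\item \(K\) is independent. \(Q\) is independent in \(H\), hence in \(D\). Vertices of \(Z_2\) are sinks, so they have no out-arcs at all. A vertex of \(Z_2\) has no in-neighbour in \(Q\), since otherwise it would be reachable from \(Q\).
\item \(K\) reaches everything. Every vertex of \(H\) is reached from \(Q\) within distance \(2\) inside \(H\subseteq D\). Every vertex of \(Z\setminus Z_2\) is reached from \(Q\) by the definition of \(Z_2\). Vertices of \(Z_2\) lie in \(K\).
\end{itemize}

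Hence \(K\) is a quasi-kernel of \(D\), and
\[
|K|\le |Q|+|Z|-|Z_1|\le (n-z)+z-\lceil z/2\rceil = n-\lceil z/2\rceil .
\]

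The only real idea is the choice of the weight \(f\), which charges each sink to one chosen in-neighbour. After that, the steps needing care are two routine facts: \(p(v)\) avoids \(Z\), and adding sinks cannot break independence. No step looks like a serious obstacle.
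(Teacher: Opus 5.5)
Your proof is correct and takes the same route as the paper: charge each sink in \(Z\) to a chosen in-neighbour, apply the weighted half-neighborhood property to \(D-Z\), and add back the unreached sinks. Your \(Z_2\) coincides with the paper's \(Z_{\mathrm{bad}}\), since the middle vertex of a length-\(2\) path into a sink is never itself a sink. The only cosmetic differences are that you round up at \(|Z_1|\) rather than at \(|K|\), and that you invoke Chv\'atal--Lov\'asz explicitly for \(z=0\).
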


\begin{proof}
If \(z=0\), the statement is immediate. Assume \(z\ge1\), and put
\[
    A=V(D)\setminus Z,
    \qquad
    H=D[A].
\]
Since \(\mathcal G\) is hereditary, we have \(H\in\mathcal G\).

For each \(v\in Z\), choose an in-neighbor \(a_v\in A\). This is possible
because \(v\) is not a source, and every in-neighbor of \(v\) lies in \(A\),
as vertices of \(Z\) are sinks.

Define \(f:V(H)\to\mathbb Z_{\ge0}\) by
\[
    f(a)=|\{v\in Z:a_v=a\}|.
\]
Then
\[
    f(V(H))=z.
\]

By the weighted half-neighborhood property, \(H\) has a quasi-kernel
\(K_A\) such that
\[
    f(N_H^+[K_A])\ge \frac z2.
\]

Let
\[
    Z_{\mathrm{bad}}
    =
    \{v\in Z:N_D^-(v)\cap N_H^+[K_A]=\varnothing\}.
\]
Every vertex of \(Z_{\mathrm{bad}}\) contributes through its chosen
in-neighbor to \(f(V(H)\setminus N_H^+[K_A])\). Hence
\[
    |Z_{\mathrm{bad}}|
    \le
    f(V(H)\setminus N_H^+[K_A])
    \le
    \frac z2.
\]

Define
\[
    K=K_A\cup Z_{\mathrm{bad}}.
\]

We claim that \(K\) is a quasi-kernel of \(D\).
The set \(K\) is independent because \(K_A\) is independent,
vertices of \(Z_{\mathrm{bad}}\) are sinks,
and no vertex of \(K_A\) sends an arc to a vertex of
\(Z_{\mathrm{bad}}\).

Moreover, \(K_A\) 2-dominates \(A\).
If \(v\in Z\setminus Z_{\mathrm{bad}}\), then \(v\) has an in-neighbor in
\(N_H^+[K_A]\), so \(v\) is reached from \(K_A\) within distance at most 2.
Finally, every vertex of \(Z_{\mathrm{bad}}\) belongs to \(K\).
Thus \(K\) is a quasi-kernel of \(D\).

Finally,
\[
    |K|
    \le
    |A|+\frac z2
    =
    |V(D)|-\frac z2.
\]
Since \(|K|\) is an integer,
\[
    |K|
    \le
    |V(D)|-\left\lceil\frac z2\right\rceil .
\]
\end{proof}

The following theorem of Nguyen--Scott--Seymour implies that break digraphs
satisfy the weighted half-neighborhood property.

\begin{theorem}[Nguyen--Scott--Seymour {\cite[Theorem 3.2]{NSS}}]\label{thm:NSS}
Let \(G\) be an oriented graph that admits a break, and let
\(f:V(G)\to\mathbb Z_{\ge0}\).
Then \(G\) has a quasi-kernel \(K\) such that
\[
    f(N_G^+[K])\ge \frac{f(V(G))}{2}.
\]
\end{theorem}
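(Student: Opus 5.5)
The plan is to reduce the weighted statement to a purely structural covering statement, and then prove that statement by induction on \(|V(G)|\) using the break \(V(G)=S\dot\cup T\). The reduction is the following. Suppose \(G\) has two quasi-kernels \(Q_1,Q_2\) with
\[
N_G^+[Q_1]\cup N_G^+[Q_2]=V(G).
\]
Then for every \(f\) we get \(f(N_G^+[Q_1])+f(N_G^+[Q_2])\ge f(V(G))\), so one of the two satisfies \(f(N_G^+[K])\ge f(V(G))/2\). If such a covering pair does not exist in general, I would instead prove the weighted statement directly by induction on \(|V(G)|+f(V(G))\), with weights carried along. Both versions rest on the same case analysis, so I describe the covering version first.

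\emph{Base case \(T=\varnothing\).} Here \(G\) is acyclic, so its reverse is acyclic and has a kernel, i.e.\ \(G\) has an out-kernel \(K\). Then \(N_G^+[K]=V(G)\), and \(K\) is trivially a quasi-kernel. This even gives the full weight, with no factor \(1/2\).

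\emph{Inductive step.} By Lemma~\ref{lem:tournament-king}, \(G[T]\) has a king \(t\). I would choose \(t\) to have maximum out-degree in \(T\), so that the exchange argument from that lemma stays available. The two candidates would be built as follows.
\begin{itemize}[leftmargin=2em]
\item \(Q_1\): contains \(t\). Let \(G_1\) be obtained by deleting \(N_G^-[t]\cup N_G^+[t]\). Then \(G_1\) is a break digraph by Lemma~\ref{lem:hereditary-break}, and \(G_1\cap T=\varnothing\), so \(G_1\) is acyclic. Take an out-kernel \(K_1\) of \(G_1\) and set \(Q_1=\{t\}\cup K_1\). Vertices of \(N_G^-(t)\cap S\) must then be 2-reached through \(N_G^+(t)\) or through \(K_1\); where this fails, I would use the standard Chv\'atal--Lov\'asz repair of Theorem~\ref{thm:CL}, adding deleted vertices whose in-neighbourhood is missed.
\item \(Q_2\): uses no vertex of \(T\) outside a controlled set. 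Take an out-kernel \(K_S\) of \(G[S]\), and use induction on \(G-N_G^+[K_S]\), which has a strictly smaller tournament part whenever some vertex of \(T\) is dominated. If no vertex of \(T\) is dominated, instead use induction on \(G-N_G^+[t]\).
\end{itemize}
Finally I would check that every vertex missed by \(N^+[Q_1]\) lies in \(N^+[Q_2]\). The vertices at risk are exactly \(N_G^-(t)\cap S\) and their out-neighbours. The acyclicity of \(S\) lets me pick a sink of \(G[S]\) among them and push it into \(Q_2\).

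The main obstacle is keeping \(Q_1\) and \(Q_2\) independent while making them cover complementary parts. Arcs between \(S\) and \(T\) are arbitrary, so the out-kernel of the acyclic side can clash with the king. If the clean covering pair fails, I would fall back on the weighted induction, which is strictly more flexible. There I would remove a vertex \(s\in S\) that is a source of \(G[S]\) (or a sink, in the other case), transfer its weight \(f(s)\) to a neighbour in \(T\), and apply induction to \(G-s\) or to \(G-N^-[s]\). I would then verify that the quasi-kernel obtained extends back to \(G\) while retaining half of the redistributed weight. This is the step I expect to require the most care.
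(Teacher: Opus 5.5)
\textbf{There is a genuine gap.} The paper does not prove this theorem itself. It imports it from Nguyen--Scott--Seymour and only adds the remark passing from positive to nonnegative weights via $Nf+1$. Your proposal therefore has to stand on its own, and its main route aims at a false statement.

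\textbf{The covering pair need not exist.} The reduction ``two quasi-kernels with $N_G^+[Q_1]\cup N_G^+[Q_2]=V(G)$ give the weighted bound'' is valid, but such a pair can fail to exist already when $S=\varnothing$. Take the tournament on $\mathbb{Z}_7$ with $i\to j$ iff $j-i\in\{1,2,4\}$.
\begin{itemize}
\item It is a break digraph, and its quasi-kernels are exactly the singletons $\{v\}$, since every vertex is a king.
\item Any two vertices have a common in-neighbour. The in-neighbours $3,5,6$ of $0$ have out-neighbourhoods $\{4,5,0\}$, $\{6,0,2\}$ and $\{0,1,3\}$, which together contain every $b\neq 0$. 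By translation, every pair $a,b$ has a common in-neighbour $u$.
\item Such a $u$ lies outside $N^+[\{a\}]\cup N^+[\{b\}]$.
\end{itemize}
Here the weighted inequality holds only because averaging over all seven kings covers each vertex with frequency $4/7$. By LP duality, the weighted statement for all $f$ is equivalent to a probability distribution on quasi-kernels that covers every vertex with probability at least $1/2$. A covering pair is the special case of the uniform distribution on two quasi-kernels, and it is false in general. So the final check ``every vertex missed by $N^+[Q_1]$ lies in $N^+[Q_2]$'' cannot be carried out.

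\textbf{The construction is also incomplete on its own terms.}
\begin{itemize}
\item Your list of vertices at risk omits $N_G^-(t)\cap T$. The king $t$ reaches these only at distance $2$, so they are not in $N_G^+[Q_1]$ unless $K_1$ happens to dominate them.
\item The set $\{t\}\cup K_1$ need not $2$-reach $N_G^-(t)\cap S$.
\item The ``Chv\'atal--Lov\'asz repair'' is not specified. Added vertices can conflict with $t$ or $K_1$, and they change $N_G^+[Q_1]$.
\item Merging $K_S$ with a quasi-kernel obtained by induction forces you to delete its out-neighbours from $K_S$, as in the proof of Theorem~\ref{thm:outkernel-side}. This alters the closed out-neighbourhood in a way you never control.
\end{itemize}

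\textbf{The fallback is only announced.} The weighted induction, where you delete a source or sink $s$ of $G[S]$, move $f(s)$ to a neighbour in $T$, and recurse on $G-s$ or $G-N_G^-[s]$, is where the entire content of the theorem lies. You would need to specify:
\begin{itemize}
\item which vertex $w$ of $T$ receives $f(s)$;
\item why re-inserting $s$ keeps the set independent and still $2$-reaches $s$;
\item why the resulting quasi-kernel keeps half of the \emph{original} weight in its closed out-neighbourhood. If $w\in N_G^+[Q]$, nothing forces $s\in N_G^+[Q]$ unless $w\in Q$ and $w\to s$.
\end{itemize}
None of this is done, so the proposal contains no proof of the inductive step in either version. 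A correct argument has to be genuinely weighted, or fractional in the LP-duality sense above, from the start rather than routed through a two-quasi-kernel cover.
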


\begin{remark}
In \cite[Theorem 3.2]{NSS}, the function \(f\) is assumed to be positive.
The same statement for nonnegative functions follows by applying the theorem
to \(Nf+1\) with \(N\) sufficiently large.
\end{remark}

Since the class of break digraphs is hereditary
(Lemma~\ref{lem:hereditary-break}),
Theorem~\ref{thm:NSS} implies that break digraphs satisfy the weighted
half-neighborhood property. Consequently,
Proposition~\ref{prop:sinks-general} applies to every break digraph.
\begin{corollary}
The class of break digraphs satisfies the weighted half-neighborhood property.
\end{corollary}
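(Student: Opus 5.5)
The statement is essentially a restatement of Theorem~\ref{thm:NSS} in the language of the definition just introduced, so the plan is to verify the three ingredients of the definition in turn and invoke the earlier results.

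\textbf{Step 1: oriented graphs and heredity.} By definition a break digraph is an oriented graph, so the class of break digraphs is a class of oriented graphs. By Lemma~\ref{lem:hereditary-break}, every induced subdigraph of a break digraph is again a break digraph. Hence the class is hereditary, as the definition of the weighted half-neighborhood property requires.

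\textbf{Step 2: the weighted condition.} Fix a break digraph \(H\) and a function \(f:V(H)\to\mathbb Z_{\ge0}\). Theorem~\ref{thm:NSS} is stated for nonnegative integer weights, and applying it to \(G=H\) and \(f\) produces a quasi-kernel \(Q\) with
\[
f(N_H^+[Q])\ge \frac{f(V(H))}{2},
\]
which is exactly the required inequality.

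\textbf{Step 3: reduction from positive to nonnegative weights.} The only point needing care is that the original source assumes \(f\) positive, so one should check the reduction sketched in the remark. Set \(g=Nf+1\), which is positive. Theorem~\ref{thm:NSS} for \(g\) gives a quasi-kernel \(Q\) with \(g(N^+[Q])\ge g(V)/2\). Expanding both sides,
\[
N\,f(N^+[Q])+|N^+[Q]|\ge \frac{N f(V)+|V|}{2}.
\]
Dividing by \(N\) gives
\[
f(N^+[Q])\ge \frac{f(V)}{2}-\frac{|V|}{2N}.
\]
Since \(2f(N^+[Q])\) and \(f(V)\) are integers, choosing \(N>|V|\) makes the error term \(|V|/(2N)\) smaller than \(1/2\). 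An integer \(2f(N^+[Q])\) exceeding \(f(V)-1\) must then be at least \(f(V)\), so \(f(N^+[Q])\ge f(V)/2\).

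There is no real obstacle; this integrality rounding in Step 3 is the only step beyond quoting the earlier results.
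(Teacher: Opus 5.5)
Your proposal is correct and matches the paper's argument. You use heredity from Lemma~\ref{lem:hereditary-break}, the weighted inequality from Theorem~\ref{thm:NSS}, and the positive-to-nonnegative reduction via $Nf+1$ from the accompanying remark; your Step~3 correctly fills in the integrality rounding that the paper only sketches, with the division step tacitly using the harmless bound $|N^+[Q]|\le |V|$.
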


The weighted half-neighborhood property is also stable under natural graph operations. We next show that it is preserved under blow-ups, yielding many additional hereditary examples satisfying the same weighted conclusion.

\begin{theorem}[Weighted half-neighborhood property is preserved by blow-up]\label{thm:blowup}
Let $H$ be an oriented graph. Assume that $H$ satisfies the following property:
for every weight function $F:V(H)\to \mathbb{Z}_{\ge0}$, there exists a quasi-kernel $K_H$ of $H$ such that
\[
F\big(N_H^{+}[K_H]\big)\ \ge\ \frac{F(V(H))}{2}.
\]
Then every blow-up $G$ of $H$ satisfies the analogous weighted property:
for every weight function $f:V(G)\to \mathbb{Z}_{\ge0}$, there exists a quasi-kernel $K$ of $G$ such that
\[
f\big(N_G^{+}[K]\big)\ \ge\ \frac{f(V(G))}{2}.
\]
In particular, if $H$ admits a break, then every blow-up of $H$ satisfies the weighted
conclusion of Nguyen--Scott--Seymour~\cite[Theorem~3.2]{NSS}.
\end{theorem}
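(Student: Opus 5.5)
Given a weight function \(f\) on \(V(G)\), I will push it down to \(H\) and lift a quasi-kernel of \(H\) back to \(G\). Concretely, define \(F:V(H)\to\mathbb Z_{\ge0}\) by
\[
F(v)=f(B_v)=\sum_{x\in B_v}f(x).
\]
Then \(F(V(H))=f(V(G))\). By hypothesis, \(H\) has a quasi-kernel \(K_H\) with \(F(N_H^+[K_H])\ge F(V(H))/2\). The candidate set is the full lift
\[
K=\bigcup_{v\in K_H}B_v .
\]

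\textbf{Independence.} The set \(K\) is independent in \(G\). Each \(B_v\) is independent by the definition of a blow-up. For distinct \(u,v\in K_H\) there are no arcs between \(B_u\) and \(B_v\), because there is no arc between \(u\) and \(v\) in \(H\).

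\textbf{2-domination.} Take any \(y\in V(G)\), say \(y\in B_w\). Since \(K_H\) is a quasi-kernel, either \(w\in K_H\), or \(v\to w\) for some \(v\in K_H\), or \(v\to u\to w\) for some \(v\in K_H\) and \(u\in V(H)\). In the first case \(y\in K\). In the second, every vertex of \(B_v\subseteq K\) sends an arc to \(y\). In the third, pick any \(x\in B_u\), which is possible since blow-up parts are nonempty; then \(B_v\to x\to y\). So \(y\) is reached from \(K\) within distance \(2\).

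\textbf{Weight bound.} I will show
\[
N_G^+[K]=\bigcup_{w\in N_H^+[K_H]}B_w .
\]
For \(\supseteq\): each \(B_w\) with \(w\in K_H\) lies in \(K\), and each \(B_w\) with \(w\in N_H^+(K_H)\) receives arcs from some \(B_v\subseteq K\). For \(\subseteq\): an out-neighbour of a vertex in \(B_v\) with \(v\in K_H\) lies in some \(B_w\) with \(vw\in A(H)\). Since \(K\) is a union of blocks, the set \(N_G^+[K]\) is also a union of blocks, and
\[
f(N_G^+[K])=F(N_H^+[K_H])\ge F(V(H))/2=f(V(G))/2 .
\]

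\textbf{Final clause and main obstacle.} The last sentence of the theorem follows by applying this argument with Theorem~\ref{thm:NSS}, together with the remark extending it to nonnegative weights. The only step needing care is the 2-domination lift, specifically the distance-2 case. There one needs a representative of the middle block, which exists only because every part \(B_u\) is nonempty; apart from that, everything is routine bookkeeping.
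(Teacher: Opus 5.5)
Your proposal is correct and matches the paper's proof step for step: you use the same pushed-down weight \(F(v)=f(B_v)\), the same lifted set \(K=\bigcup_{v\in K_H}B_v\), and the same three checks (independence, 2-domination, and the block identity \(N_G^+[K]=\bigcup_{w\in N_H^+[K_H]}B_w\)). You are slightly more explicit than the paper on two points it leaves implicit: the ``\(\subseteq\)'' direction of that identity, and the use of nonempty parts in the distance-\(2\) case.
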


\begin{proof}
Let $G$ be a blow-up of $H$ with parts $\{B_v: v\in V(H)\}$.
Fix any weight function $f:V(G)\to \mathbb{Z}_{\ge0}$.
Define a weight function $F:V(H)\to \mathbb{Z}_{\ge0}$ by
\[
F(v)\ :=\ \sum_{x\in B_v} f(x)\qquad (v\in V(H)).
\]
Clearly,
\[
F(V(H))=\sum_{v\in V(H)}F(v)=\sum_{x\in V(G)}f(x)=f(V(G)).
\]

By the hypothesis on $H$, there exists a quasi-kernel $K_H\subseteq V(H)$ such that
\[
F\big(N_H^{+}[K_H]\big)\ \ge\ \frac{F(V(H))}{2}.
\]
Now define
\[
K \ :=\ \bigcup_{v\in K_H} B_v \ \subseteq V(G).
\]

\smallskip
\noindent\textbf{Claim 1: $K$ is independent in $G$.}
Since \(K_H\) is independent in \(H\), there is no arc between \(B_u\) and \(B_v\) whenever \(u,v\in K_H\) are distinct. Each \(B_u\) is independent by definition of blow-up. Hence \(K\) is independent.

\smallskip
\noindent\textbf{Claim 2: $K$ is a quasi-kernel of $G$.}
Let $y\in V(G)$ and choose $w\in V(H)$ with $y\in B_w$.
Since $K_H$ is a quasi-kernel of $H$, there exists $v\in K_H$ such that $w$ is reachable from $v$
by a directed path in $H$ of length at most $2$.
If $v=w$, then $y\in B_v\subseteq K$.
If $v\to w$ is an arc of $H$, then every vertex of $B_v\subseteq K$ has an arc to every vertex of $B_w$,
so $y$ is reached from $K$ in one step.
If $v\to x\to w$ is a directed path of length $2$ in $H$, then by the blow-up definition we have
all arcs $B_v\to B_x$ and all arcs $B_x\to B_w$ in $G$, so $y$ is reached from $K$ in two steps.
Hence every vertex of $G$ is reachable from $K$ by a directed path of length at most $2$, and $K$ is a quasi-kernel.

\smallskip
\noindent\textbf{Claim 3:
\[
N_G^{+}[K]\ =\ \bigcup_{u\in N_H^{+}[K_H]} B_u.
\]}
Indeed, a vertex of $G$ lies in $N_G^{+}[K]$ precisely if it is in some $B_u$ where either
$u\in K_H$ (so $B_u\subseteq K$) or there exists $v\in K_H$ with an arc $v\to u$ in $H$,
in which case all arcs $B_v\to B_u$ appear in $G$ and thus $B_u\subseteq N_G^{+}(K)$.
This gives the claimed equality, and therefore
\[
f\big(N_G^{+}[K]\big)
=\sum_{u\in N_H^{+}[K_H]}\ \sum_{x\in B_u} f(x)
=\sum_{u\in N_H^{+}[K_H]} F(u)
=F\big(N_H^{+}[K_H]\big).
\]

Combining the above identities yields
\[
f\big(N_G^{+}[K]\big)
=F\big(N_H^{+}[K_H]\big)
\ \ge\ \frac{F(V(H))}{2}
=\frac{f(V(G))}{2}.
\]
This completes the proof.
\end{proof}

\section{A \texorpdfstring{$2n/3$}{2n/3} bound in hereditary classes}

We now combine Proposition~\ref{prop:sinks-general} with a matching argument to obtain the main structural result of this paper. The theorem below shows that every source-free digraph in a hereditary class satisfying the weighted half-neighborhood property admits a quasi-kernel of size at most \(2n/3\).

\begin{theorem}\label{thm:general-2n3}
Let \(\mathcal G\) be a hereditary class satisfying the weighted
half-neighborhood property.
Let \(D\in\mathcal G\) be source-free, and let \(n=|V(D)|\).
Then \(D\) has a quasi-kernel \(K\) such that
\[
    |K|\le \frac{2n}{3}.
\]
\end{theorem}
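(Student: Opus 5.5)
The plan is to play two bounds against each other, using a maximal matching in the underlying undirected graph of \(D\).

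Let \(M\) be a maximal set of pairwise vertex-disjoint arcs of \(D\). Put \(A=V(M)\) and \(U=V(D)\setminus A\). Then \(U\) is independent.

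\textbf{First bound.} By Theorem~\ref{thm:CL}, \(D\) has some quasi-kernel \(K_0\). Since \(K_0\) is independent, it contains at most one end of each arc of \(M\). Hence \(|K_0|\le n-|M|\), and if \(|M|\ge n/3\) we are done. So we may assume \(|M|<n/3\), which gives \(|U|=n-2|M|>n/3\).

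\textbf{Second bound.} Because \(D\) is source-free and \(U\) is independent, every \(u\in U\) has an in-neighbour \(a_u\in A\). We now imitate the proof of Proposition~\ref{prop:sinks-general}, with \(U\) in the role of \(Z\).
\begin{itemize}
\item Set \(H=D[A]\in\mathcal G\) and \(f(a)=|\{u\in U:a_u=a\}|\).
\item The weighted half-neighborhood property gives a quasi-kernel \(K_A\) of \(H\) with \(f(N_H^+[K_A])\ge |U|/2\).
\item \(H\) has a perfect matching and \(K_A\) is independent, so \(|K_A|\le |M|\).
\item Let \(U_{\mathrm{bad}}\) be the set of \(u\in U\) with no in-neighbour in \(N_H^+[K_A]\). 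Then \(|U_{\mathrm{bad}}|\le |U|/2\).
\end{itemize}
The candidate set \(K_A\cup U_{\mathrm{bad}}\) would have size at most \(|M|+|U|/2=n/2\). This is better than we need, and the slack down to \(2n/3\) is what will pay for repairing it.

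\textbf{Main obstacle: independence.} Unlike in Proposition~\ref{prop:sinks-general}, vertices of \(U\) need not be sinks. There are no arcs from \(K_A\) to \(U_{\mathrm{bad}}\), by definition of \(U_{\mathrm{bad}}\), but there may be arcs from \(U_{\mathrm{bad}}\) to \(K_A\).

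The repair I would try first is the following.
\begin{itemize}
\item Keep \(U_{\mathrm{bad}}\), and delete from \(K_A\) the set \(X=K_A\cap N^+(U_{\mathrm{bad}})\).
\item Each vertex of \(X\) and each out-neighbour of \(X\) is then reached from \(U_{\mathrm{bad}}\) within two steps.
\item The vertices of \(A\) that were reached only via a length-\(2\) path starting in \(X\) may now be uncovered. Call this set \(W\).
\item For \(W\), apply Theorem~\ref{thm:CL} to the induced subdigraph on \(W\) minus its in-neighbours in the current set, and add the resulting quasi-kernel, in the usual Chvátal--Lovász peeling fashion.
\end{itemize}

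\textbf{Counting.} The additions must be charged so that the total stays below \(2n/3\). I would charge each added vertex to a distinct arc of \(M\) or to a distinct vertex of \(U\setminus U_{\mathrm{bad}}\), so that the final size is at most \(|M|+|U|/2+(\text{charge})\). I would then check that, under \(|M|<n/3\), this is at most \(2n/3\), i.e.\ the charge is at most \(n/6\) in the worst case.

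If this direct repair does not close, the fallback is to pass to a carefully chosen induced subdigraph in which the relevant vertices of \(U\) genuinely are sinks and are not sources, and apply Proposition~\ref{prop:sinks-general} verbatim. The natural choice is to delete the out-neighbourhoods of \(U\) inside \(A\) that cause the conflicts, so that \(z\) is large. We would then balance \(n-\lceil z/2\rceil\) against \(n-|M|\) to obtain \(2n/3\).

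The independence repair is the step I expect to be hardest. The choice of \(M\) as maximal is what lets both counts interact, since every arc either lies in \(M\), touches \(A\), or is absent from \(U\).
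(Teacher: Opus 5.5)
\emph{Verdict: genuine gap.} Your first bound is correct, and comparing two quasi-kernels is also what the paper does. But your second quasi-kernel is never actually constructed. $K_A\cup U_{\mathrm{bad}}$ is not independent, and the step you flag as the main obstacle is the whole difficulty; neither the repair nor the fallback resolves it.

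\textbf{The repair.} Deleting $X$ can also uncover vertices of $U\setminus U_{\mathrm{bad}}$, which your $W$ omits. A vertex whose in-neighbours in $N_H^+[K_A]$ are all out-neighbours of $X$ only ends up at distance $3$ from the new set. Also, an uncovered vertex of $W$ with an arc into $(K_A\setminus X)\cup U_{\mathrm{bad}}$ can neither be added nor is it reached by a quasi-kernel of the rest of $W$, so the peeling leaves holes.

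\textbf{The counting.} Your charging cannot certify a charge of at most $n/6$, since already $|U\setminus U_{\mathrm{bad}}|\ge|U|/2>n/6$. More tellingly, suppose some repair adds only vertices of $A$ and stays independent. Then $(K_A\setminus X)\cup Q_W$ is an independent subset of the perfectly matched set $A$. This would give a quasi-kernel of size at most $|M|+|U|/2=n/2$ for every source-free $D\in\mathcal G$, without using $|M|<n/3$. That is the $n/2$ bound the paper only conjectures, even for break digraphs, so you should not expect a routine local repair to exist.

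\textbf{The fallback.} It does not balance either. With $z\le|U|=n-2|M|$ you get $n-z/2\ge n/2+|M|$, and $\min\{n-|M|,\,n/2+|M|\}=3n/4$ at $|M|=n/4$. Moreover, the deleted set $N^+(U)\cap A$ still has to be covered. Vertices of $U$ all of whose in-neighbours lie in that set become sources, so they do not count towards $z$.

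\textbf{The missing idea.} Take the independent leftover set inside a quasi-kernel, so that its out-arcs land in a set that gets covered for free. The paper proceeds as follows.
\begin{enumerate}
\item Fix an inclusion-minimal quasi-kernel $K$, and put $N=N_D^+(K)$ and $R=V(D)\setminus(K\cup N)$.
\item Take a maximum matching of arcs from $K$ to $N$. Let $K_1$ be the matched part, so $r=|K_1|\le p=|N|$, and $K_2$ the unmatched part, with $s=|K_2|$.
\item Maximality of the matching gives every vertex of $N$ an in-neighbour in $K_1$. Minimality of $K$ then forbids arcs from $N$ to $K_2$.
\item Hence each vertex of $K_2$ is a sink but not a source in $D[K_2\cup R]$. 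Proposition~\ref{prop:sinks-general} applies verbatim and gives $K'$ with $|K'|\le|R|+s/2$.
\item $K^*=K'\cup(K_1\setminus N_D^+(K'))$ is a quasi-kernel of $D$, in which $N$ is reached through $K_1$ at no extra cost.
\item Compare $|K|=r+s$ with $|K^*|\le r+|R|+s/2$. Using $n=r+s+p+|R|$ and $r\le p$, this gives $\min\{|K|,|K^*|\}\le 2n/3$.
\end{enumerate}
Your set $U$ of unmatched vertices of an arbitrary maximal matching has no such control over its out-arcs, which is exactly why it fails to be a set of sinks.
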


\begin{proof}
By Proposition~\ref{prop:sinks-general}, every induced subdigraph
\(H\in\mathcal G\) with \(z\) sinks that are not sources
has a quasi-kernel \(Q\) satisfying
\[
    |Q|
    \le
    |V(H)|-\left\lceil\frac z2\right\rceil .
\]

Choose an inclusion-minimal quasi-kernel \(K\subseteq V(D)\). Let
\[
    N=N_D^+(K),
    \qquad
    M=V(D)\setminus (K\cup N).
\]
Thus
\[
    V(D)=K\dot\cup N\dot\cup M.
\]

Consider the bipartite graph with parts \(K\) and \(N\), where
\(ku\) is an edge whenever \(k\to u\) in \(D\).
Let \(\mathcal M\) be a maximum matching.
Let \(K_1\subseteq K\) be the set of vertices saturated by \(\mathcal M\),
and let
\[
    K_2=K\setminus K_1.
\]

Write
\[
    r=|K_1|,
    \qquad
    s=|K_2|,
    \qquad
    p=|N|,
    \qquad
    m=|M|.
\]
Then
\[
    n=r+s+p+m,
\]
and since \(\mathcal M\) is a matching,
\[
    r\le p.
\]

\medskip
\noindent\textbf{Claim 1.}
Every vertex \(u\in N\) has an in-neighbor in \(K_1\).

\begin{proof}
By definition of \(N=N_D^+(K)\), every \(u\in N\) has an in-neighbor in \(K\).
If none of these in-neighbors belonged to \(K_1\), then one of them would lie
in \(K_2\), and matching this vertex to \(u\) would enlarge \(\mathcal M\),
a contradiction.
\end{proof}

\medskip
\noindent\textbf{Claim 2.}
No vertex of \(K_2\) has an in-neighbor in \(N\).

\begin{proof}
Suppose, to the contrary, that \(u\to v\) for some \(u\in N\) and
\(v\in K_2\). By Claim~1, there exists \(w\in K_1\) such that \(w\to u\).
Then
\[
    w\to u\to v,
\]
so \(v\) is reached from \(K\setminus\{v\}\) within distance at most 2.

We claim that \(K\setminus\{v\}\) is still a quasi-kernel of \(D\),
contradicting the inclusion-minimality of \(K\).
The set \(K\setminus\{v\}\) remains independent.
Moreover, every vertex of \(N_D^+(v)\) lies in \(N\), and every vertex of \(N\)
has an in-neighbor in \(K_1\) by Claim~1. Hence all former out-neighbors of
\(v\) are still reached from \(K\setminus\{v\}\) within distance at most \(1\),
while \(v\) itself is reached by the directed path \(w\to u\to v\).
Thus \(K\setminus\{v\}\) is a quasi-kernel of \(D\), a contradiction.
\end{proof}

Now consider
\[
    H=D[K_2\cup M].
\]
Since \(\mathcal G\) is hereditary and \(D\in\mathcal G\),
we have \(H\in\mathcal G\).

Every vertex of \(K_2\) is a sink in \(H\), because \(K\) is independent and
all out-neighbors of vertices of \(K\) lie in \(N\).
Moreover, no vertex of \(K_2\) is a source in \(H\).
Indeed, since \(D\) is source-free, every vertex of \(K_2\) has an
in-neighbor in \(D\). This in-neighbor cannot lie in \(K\), by the
independence of \(K\), and cannot lie in \(N\), by Claim~2.
Hence it lies in \(M\).

Therefore \(H\) has at least \(s=|K_2|\) vertices that are sinks but not
sources. By Proposition~\ref{prop:sinks-general}, \(H\) has a quasi-kernel
\(K'\) such that
\[
    |K'|
    \le
    |V(H)|-\left\lceil\frac s2\right\rceil
    \le
    m+\frac s2.
\]

Define
\[
    K^*
    =
    K'
    \cup
    \bigl(K_1\setminus N_D^+(K')\bigr).
\]

We claim that \(K^*\) is a quasi-kernel of \(D\).

First, \(K^*\) is independent.
The set \(K'\) is independent, and
\(K_1\setminus N_D^+(K')\) is independent because it is contained in \(K\).
There is no arc from \(K'\) to
\(K_1\setminus N_D^+(K')\) by definition.
There is also no arc from \(K_1\setminus N_D^+(K')\) to \(K'\).
Indeed, if \(x\in K_1\) sent an arc to some \(y\in K'\), then
\(y\notin K_2\), since \(K\) is independent, and
\(y\notin M\), since then \(y\in N_D^+(K)\), contradicting \(y\in M\).

Next, \(K^*\) 2-dominates \(D\).
Since \(K'\) is a quasi-kernel of \(H=D[K_2\cup M]\),
every vertex of \(K_2\cup M\) is reached from \(K'\) within distance at most 2.

Every vertex of \(K_1\) is reached from \(K^*\) within distance at most \(1\):
either it belongs to \(K_1\setminus N_D^+(K')\), or it lies in \(N_D^+(K')\).

Finally, if \(u\in N\), then by Claim~1 there exists \(w\in K_1\)
with \(w\to u\).
Since \(w\) is reached from \(K^*\) within distance at most \(1\),
the vertex \(u\) is reached from \(K^*\) within distance at most 2.
Thus \(K^*\) is a quasi-kernel of \(D\).

Moreover,
\[
    |K^*|
    \le
    |K'|+|K_1|
    \le
    m+\frac s2+r.
\]

It remains to compare \(K\) and \(K^*\).
Suppose, for a contradiction, that both have size greater than \(2n/3\).

Since \(|K|=r+s\), we have
\[
    r+s>\frac{2(r+s+p+m)}{3}.
\]
Thus
\[
    r+s>2p+2m.
\]
Using \(p\ge r\), this gives
\[
    s>2m.
\]

On the other hand, since
\[
    |K^*|\le r+m+\frac s2
\]
and \(|K^*|>2n/3\), we obtain
\[
    r+m+\frac s2>\frac{2(r+s+p+m)}{3}.
\]
Equivalently,
\[
    2r+2m-s>4p.
\]
Using \(p\ge r\), this implies
\[
    2r+2m-s>4r,
\]
and hence
\[
    s<2m-2r\le 2m.
\]
This contradicts \(s>2m\).

Therefore at least one of \(K\) and \(K^*\) has size at most \(2n/3\).
Since both are quasi-kernels of \(D\), the result follows.
\end{proof}

\begin{corollary}
Every source-free break digraph \(D\) on \(n\) vertices has a quasi-kernel
\(K\) such that
\[
    |K|\le \frac{2n}{3}.
\]
\end{corollary}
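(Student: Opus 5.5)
This corollary follows by specialising Theorem~\ref{thm:general-2n3} to the class \(\mathcal G\) of all break digraphs. The theorem needs two facts about \(\mathcal G\): that it is hereditary, and that it has the weighted half-neighborhood property. No new combinatorial argument is required.

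\textbf{Heredity.} Lemma~\ref{lem:hereditary-break} shows that every induced subdigraph of a break digraph is again a break digraph. Restricting a break \(S\dot\cup T\) to \(X\) gives \(X\cap S\), which induces an acyclic digraph, and \(X\cap T\), which induces a tournament. Here we allow empty parts, so the empty digraph and pure tournaments or acyclic digraphs count as break digraphs.

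\textbf{Weighted property.} Theorem~\ref{thm:NSS}, together with the remark that extends it from positive to nonnegative weights, gives exactly the weighted half-neighborhood property for every member of \(\mathcal G\). This is recorded in the corollary following that theorem.

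\textbf{Conclusion.} Let \(D\) be a source-free break digraph on \(n\) vertices. Since \(D\in\mathcal G\), Theorem~\ref{thm:general-2n3} gives a quasi-kernel \(K\) with \(|K|\le 2n/3\).

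The only point needing care is that the proof of Theorem~\ref{thm:general-2n3} applies Proposition~\ref{prop:sinks-general} to induced subdigraphs \(D[K_2\cup M]\). Inside that proposition, the weighted property is invoked on a further induced subdigraph \(D[A]\). Both steps are covered by heredity, and the remark covers the zero weights that can arise there. I expect no real obstacle: the only step that needs checking is that the empty-part convention makes \(\mathcal G\) closed under all induced subdigraphs, which Lemma~\ref{lem:hereditary-break} already handles.
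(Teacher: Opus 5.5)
Your proposal is correct and is essentially the paper's own proof: the corollary follows by applying Theorem~\ref{thm:general-2n3} to the class of break digraphs, which is hereditary by Lemma~\ref{lem:hereditary-break} and has the weighted half-neighborhood property by Theorem~\ref{thm:NSS}. You also check explicitly that zero weights, which arise in Proposition~\ref{prop:sinks-general}, are covered by the nonnegative-weight remark; the paper relies on that remark implicitly at the same point.
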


\begin{proof}
The class of break digraphs is hereditary by
Lemma~\ref{lem:hereditary-break}, and satisfies the weighted
half-neighborhood property by Theorem~\ref{thm:NSS}.
Hence the result follows from Theorem~\ref{thm:general-2n3}.
\end{proof}

\section{Two special cases of break digraphs}

The general \(2n/3\) theorem naturally raises the question whether stronger
bounds hold for more structured subclasses of break digraphs. In this section
we record two situations in which one can obtain an \(n/2\) bound.

\begin{definition}
A digraph \(D\) is a \emph{one-way break digraph} if \(V(D)\) has a partition
\(V(D)=S\dot\cup T\) such that \(D[S]\) is acyclic, \(D[T]\) is a tournament,
and every arc between \(S\) and \(T\) is directed from \(T\) to \(S\).
\end{definition}

\begin{theorem}\label{thm:one-way-break}
Let \(D=S\dot\cup T\) be a source-free one-way break digraph on \(n\) vertices,
where \(D[S]\) is acyclic and \(D[T]\) is a tournament. Then \(D\) has a
quasi-kernel \(K\) with \(|K|\le n/2\).
\end{theorem}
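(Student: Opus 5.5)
Since $D$ is source-free and no arc goes from $S$ to $T$, every vertex of $T$ has an in-neighbour inside $T$. If $T=\varnothing$, then $D=D[S]$ is acyclic and so has a source, which is impossible; hence $T\neq\varnothing$. The plan is to build a quasi-kernel of the form $Q=\{t\}\cup Q_S$ with $t\in T$ and $Q_S\subseteq S$, and then to exhibit an injection from $Q$ into $V(D)\setminus Q$. Such an injection gives $|Q|\le n/2$ immediately.

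\textbf{Step 1 (existence of the special form).} Let $t$ be a king of $D[T]$, which exists by Lemma~\ref{lem:tournament-king}. Then $t$ reaches all of $T$ within distance $2$. Let $R\subseteq S$ be the set of vertices of $S$ not reached from $t$ within distance $2$. Take a quasi-kernel $Q_R$ of $D[R]$, which exists by Theorem~\ref{thm:CL} (or directly via the acyclic structure).

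Then $Q=\{t\}\cup Q_R$ is independent. Indeed, $R\cap N^+_D(t)=\varnothing$, no arc goes from $S$ to $t$, and $Q_R$ is independent. Every vertex of $D$ is reached from $Q$ within distance $2$. So quasi-kernels of the form $\{t\}\cup Q_S$ with $t$ a king of $D[T]$ exist. Among all such pairs $(t,Q_S)$ I fix one with $|Q_S|$ minimum, and with $Q_S$ inclusion-minimal subject to that.

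\textbf{Step 2 (the injection).} First, $t$ is sent to an in-neighbour $t'\in T$; this vertex lies outside $Q$ because $Q\cap T=\{t\}$. For $Q_S$, I copy the matching argument from the proof of Theorem~\ref{thm:general-2n3}. Take a maximum matching $\mathcal M$ in the bipartite graph between $Q_S$ and $N^+_D(Q)\setminus\{t'\}$, using arcs from $Q_S$. Let $K_2\subseteq Q_S$ be the set of unmatched vertices.

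As in Claim~1, every vertex of $N^+(Q_S)$ has an in-neighbour that is matched or equals $t$. As in Claim~2, minimality then forbids a vertex $v\in K_2$ from having an in-neighbour in $N^+(Q)$: otherwise $Q\setminus\{v\}$ would still be a quasi-kernel of the same form. Since $D$ is source-free, each $v\in K_2$ has an in-neighbour. That in-neighbour lies neither in $Q$ nor in $N^+(Q)$, so it is at distance exactly $2$ from $Q$. Moreover it must lie in $S$: an in-neighbour in $T$ would be within distance $2$ of the king $t$, but it cannot be in $N^+(t)$ because $v\notin N^+(Q)$. Tracing the last $T$-vertex is delicate, so I would instead argue that an in-neighbour $x\in T$ of $v$ is either $t$ or in $N^+(t)$, or else is reached through some $y\in N^+(t)$ with $y\to x$. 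This gives an alternative quasi-kernel obtained by swapping in the king $x$ for $t$ in a suitable way.

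\textbf{Step 3 (charging $K_2$).} Each $v\in K_2$ is charged to an in-neighbour $m_v\in S\setminus N^{+}[Q]$. The key point is that these $m_v$ must be distinct, and distinct from the matched partners and from $t'$. If two vertices $v,v'\in K_2$ shared the same $m$, I would replace $\{v,v'\}$ by $m$, after deleting any out-neighbours of $m$ from $Q_S$. Because $D[S]$ is acyclic and $m\notin N^+(Q)$, the new set is still independent and $2$-dominating, and it is strictly smaller. This contradicts the choice of $|Q_S|$ minimum. The matched partners lie in $N^+(Q)$ while the $m_v$ do not, so the images are disjoint, and the injection exists.

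\textbf{Main obstacle.} I expect the hardest part to be this exchange step, together with keeping $t$ a valid king (or at least keeping $T$ covered) after the swaps. The first thing I would try is to choose $t$ and $Q_S$ jointly, lexicographically minimising $|Q_S|$ and then $|N^{\le 2}$-uncovered$|$. This should ensure that each replacement vertex $m$ introduces no arcs to the remaining $Q_S$ and causes no loss of coverage.
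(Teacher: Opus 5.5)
Your argument has a genuine gap in Steps 2--3, namely the injection $Q\to V(D)\setminus Q$. You never establish that an in-neighbour of $v\in K_2$ lies in $S$; you acknowledge this and fall back on an unspecified swap. The claim is also false in general: when $D[S]$ has no arcs, every in-neighbour of a vertex of $S$ lies in $T$.

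Concretely, let $T$ be the directed triangle $a\to b\to c\to a$ and let $S=\{s_1,s_2,s_3\}$ be independent, with arcs $a\to s_1$, $b\to s_2$, $c\to s_3$. This is a source-free one-way break digraph, and every vertex of $T$ is a king. For $t=a$, the vertex $s_3$ is at distance $3$ from $a$ and can only be covered by itself, so $Q_S=\{s_3\}$. But the only in-neighbour $t'$ of $a$ is $c$, and the only in-neighbour of $s_3\in K_2$ is also $c$. By symmetry the same collision occurs for $t=b$ and $t=c$, so your injection does not exist for any admissible choice. The bound holds trivially here, but your method does not prove it. In particular the last sentence of Step 3 fails: $t'$ also lies outside $N_D^+[Q]$ and can coincide with some $m_v$.

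The exchange in Step 3 is also unjustified. It is sound when $N_D^+(m)\cap Q_S\subseteq K_2$. But if $m$ has an out-neighbour $w$ in the matched part of $Q_S$, deleting $w$ can leave a vertex $y$ that was reached only along $w\to u\to y$ no longer reached within distance $2$, since $m\to w\to u\to y$ has length $3$. If $m\in T$, then $m$ is adjacent to $t$, so it cannot be added without removing $t$, and $m$ need not be a king.

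Acyclicity of $D[S]$ is cited but does no work anywhere, and it must. An argument of your shape that ignores it could be applied to the disjoint union of an arbitrary source-free oriented graph with a directed triangle. It would then give quasi-kernels of size at most $(|S|+1)/2$ in general, which is essentially the open Small Quasi-kernel Conjecture.

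The paper avoids all of this by induction on $|A(D[S])|$. The base case ($S$ independent, exactly the case where your argument breaks) is the one-way split theorem of Ai, Gerke, Gutin, Yeo and Zhou. In the inductive step one takes a longest path $v_0\ldots v_k$ in $D[S]$ and deletes $v_{k-1}$ together with $X=N_D^+(v_{k-1})$. By maximality of the path, acyclicity and one-wayness, $X$ consists of sinks of $D$, so the remainder is still source-free. Then either the quasi-kernel of the remainder already reaches $v_{k-1}$ and $X$, or adding $v_{k-1}$ costs one vertex while at least two were removed.
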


\begin{proof}
We argue by induction on \(|A(D[S])|\). If \(|A(D[S])|=0\), then \(D\) is a
one-way split digraph, and the assertion follows from the result of
Ai, Gerke, Gutin, Yeo and Zhou~\cite{AGGYZ}.

Assume now that \(|A(D[S])|\ge1\). Let \(v_0v_1\ldots v_k\) be a longest
directed path in \(D[S]\), and put
\[
    X=N_D^+(v_{k-1}).
\]
Since all arcs between \(S\) and \(T\) are directed from \(T\) to \(S\), every
out-neighbor of \(v_{k-1}\) lies in \(S\). Moreover, every vertex of \(X\) is a
sink in \(D[S]\). Indeed, if \(x\in X\) had an out-neighbor \(y\) in \(D[S]\),
then either \(v_0v_1\ldots v_{k-1}xy\) would extend the chosen path, or \(y\)
would lie earlier on the path and create a directed cycle in \(D[S]\). Both are
impossible. In particular, \(X\ne\varnothing\), since \(v_k\in X\).

Let
\[
    D'=D-\bigl(\{v_{k-1}\}\cup X\bigr).
\]
Then \(D'\) is again a source-free one-way break digraph. To see this, suppose
that some \(u\in V(D')\) became a source in \(D'\). Then every in-neighbor of
\(u\) deleted from \(D\) lies in \(\{v_{k-1}\}\cup X\). But \(v_{k-1}\) sends
arcs only to vertices of \(X\), and vertices of \(X\) are sinks in \(D[S]\) and
have no out-neighbors in \(T\). Hence no deleted vertex sends an arc to \(u\), a
contradiction. Also,
\[
    |A(D'[S])|<|A(D[S])|,
\]
because the arc \(v_{k-1}v_k\) is deleted. By induction, \(D'\) has a
quasi-kernel \(K'\) such that
\[
    |K'|\le \frac{|V(D')|}{2}=\frac{n-1-|X|}{2}.
\]

If \(v_{k-1}\) has an in-neighbor \(u\in K'\), then \(K'\) is already a
quasi-kernel of \(D\): the vertex \(v_{k-1}\) is reached by \(u\to v_{k-1}\),
and every \(x\in X\) is reached by \(u\to v_{k-1}\to x\).

Otherwise, set
\[
    K=K'\cup\{v_{k-1}\}.
\]
This set is independent, since \(N_D^+(v_{k-1})=X\) is disjoint from \(K'\) and
\(N_D^-(v_{k-1})\cap K'=\varnothing\) by assumption. It is also a quasi-kernel:
vertices of \(D'\) are still reached from \(K'\), the vertex \(v_{k-1}\) belongs
to \(K\), and every vertex of \(X\) is reached from \(v_{k-1}\) in one step.
Finally,
\[
    |K|\le \frac{n-1-|X|}{2}+1\le \frac n2,
\]
because \(|X|\ge1\). This completes the induction.
\end{proof}

\begin{theorem}\label{thm:outkernel-side}
Let \(D=S\dot\cup T\) be a source-free break digraph on \(n\) vertices, where
\(D[S]\) is acyclic and \(D[T]\) is a tournament. Suppose that \(D[S]\) has an
out-kernel \(K\) such that
\[
    |K|+1\le \frac n2.
\]
Then \(D\) has a quasi-kernel \(Q\) with \(|Q|\le n/2\). If in addition
\(T\subseteq N_D^+[K]\), the weaker assumption \(|K|\le n/2\) suffices.
\end{theorem}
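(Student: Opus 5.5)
The plan is to build the quasi-kernel directly from the out-kernel \(K\) of \(D[S]\) together with at most one vertex of the tournament side, chosen as a king (Lemma~\ref{lem:tournament-king}). First note that \(K\subseteq S\) is independent in \(D[S]\), hence independent in \(D\). By the definition of an out-kernel, every vertex of \(S\) lies in \(N_D^+[K]\), that is, it is in \(K\) or has an in-neighbour in \(K\).

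\textbf{The easy case.} Suppose \(T\subseteq N_D^+[K]\). Then \(N_D^+[K]=V(D)\), so \(K\) reaches every vertex within distance \(1\). Thus \(K\) is already a quasi-kernel, and \(|K|\le n/2\) is all that is needed. This proves the second sentence of the statement.

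\textbf{The general case.} Let \(T'=T\setminus N_D^+[K]\), which is nonempty since we are no longer in the easy case. The digraph \(D[T']\) is a nonempty tournament, so by Lemma~\ref{lem:tournament-king} it has a king \(t\). Because \(t\notin N_D^+(K)\), there is no arc from \(K\) to \(t\). There may, however, be arcs from \(t\) into \(K\). Let \(K_t=K\cap N_D^+(t)\) and define
\[
Q=(K\setminus K_t)\cup\{t\}.
\]
With this choice \(Q\) is independent, and \(|Q|\le |K|+1\le n/2\).

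\textbf{Checking 2-domination.} It remains to show that every vertex is reached from \(Q\) within distance \(2\).
\begin{itemize}
\item Vertices of \(T'\) are reached from \(t\) inside \(D[T']\), since \(t\) is a king there.
\item Vertices of \(K_t\) are reached from \(t\) in one step.
\item Take any other vertex \(v\in N_D^+(K)\); this covers \(S\setminus K\) and \(T\setminus T'\). Pick \(k\in K\) with \(k\to v\). If \(k\notin K_t\), then \(k\in Q\) and \(v\) is reached in one step. If \(k\in K_t\), then \(t\to k\to v\) is a path of length \(2\).
\end{itemize}
Hence \(Q\) is a quasi-kernel.

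\textbf{Main obstacle.} The only delicate point is independence between \(t\) and \(K\). One direction is ruled out by the choice \(t\notin N_D^+(K)\). The other direction is handled by discarding \(K_t\) and rerouting all paths through \(t\). I expect this to go through without using source-freeness.
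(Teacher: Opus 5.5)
Your proposal is correct and is essentially the paper's proof. The paper also takes a king \(v_0\) of the tournament \(D[T\setminus N_D^+[K]]\), sets \(Q=(K\setminus N_D^+(v_0))\cup\{v_0\}\), and checks independence and 2-domination as you do; as you expected, it never uses source-freeness, and vertices of \(K\setminus K_t\), which you left implicit in your case list, are covered trivially because they lie in \(Q\).
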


\begin{proof}
If \(T\subseteq N_D^+[K]\), then \(K\) itself is a quasi-kernel of \(D\): it
reaches every vertex of \(S\) in at most one step because \(K\) is an out-kernel
of \(D[S]\), and it reaches every vertex of \(T\) in at most one step by the
assumption \(T\subseteq N_D^+[K]\). Thus the final assertion follows.

We may therefore assume that
\[
    V_0=T\setminus N_D^+[K]
\]
is nonempty. Since \(D[V_0]\) is a tournament, Lemma~\ref{lem:tournament-king}
gives a king \(v_0\in V_0\). Define
\[
    Q=(K\setminus N_D^+(v_0))\cup\{v_0\}.
\]
The set \(Q\) is independent. Indeed, \(K\setminus N_D^+(v_0)\) is independent;
there is no arc from \(v_0\) to \(K\setminus N_D^+(v_0)\) by definition; and
there is no arc from \(K\setminus N_D^+(v_0)\) to \(v_0\), because
\(v_0\notin N_D^+[K]\).

We next verify that \(Q\) 2-dominates \(D\). Let \(s\in S\). Since \(K\) is an
out-kernel of \(D[S]\), either \(s\in K\) or there is some \(k\in K\) with
\(k\to s\). If such a vertex \(k\) remains in \(Q\), then \(s\) is reached from
\(Q\) in at most one step. If \(k\notin Q\), then \(v_0\to k\), and hence
\(v_0\to k\to s\) reaches \(s\) in at most two steps. Thus every vertex of
\(S\) is reached from \(Q\) within distance at most \(2\).

Now let \(t\in T\setminus V_0\). Then \(t\in N_D^+[K]\), so some \(k\in K\)
satisfies \(k\to t\). As above, either \(k\in Q\), giving a one-step path to
\(t\), or \(v_0\to k\to t\), giving a two-step path. Finally, every vertex of
\(V_0\) is reached from \(v_0\) within distance at most \(2\), since \(v_0\) is a
king of \(D[V_0]\). Hence \(Q\) is a quasi-kernel of \(D\).

The size bound follows from
\[
    |Q|\le |K|+1\le \frac n2.
\]
\end{proof}

\section{A \texorpdfstring{$\sqrt n$}{sqrt(n)} bound for the Large Quasi-kernel Conjecture}
In this section, we consider a different aspect of large quasi-kernels.
Instead of bounding the size of a quasi-kernel from above, we study how large
its closed out-neighborhood can be. This direction is motivated by the
Large Quasi-kernel Conjecture, studied by Spiro~\cite{Spiro}, which asserts
that every digraph \(D\) has a quasi-kernel \(Q\) satisfying
\[
|N_D^+[Q]|\ge \frac{|V(D)|}{2}.
\]
Previously, the best known general lower bound for quasi-kernels was
\[
|N_D^+[Q]|\ge |V(D)|^{1/3},
\]
proved by Spiro~\cite[Theorem 2.8]{Spiro}. We prove a general
theorem for hereditary classes, which improves this to a \(\sqrt n\) lower
bound for arbitrary digraphs.

\begin{theorem}\label{thm:amplification}
Let \(\mathcal G\) be a hereditary class of digraphs, and let \(c\in[0,1]\).
Suppose that every digraph \(D\in\mathcal G\) on \(n\) vertices has a
quasi-kernel \(Q\) with
\[
    |Q|\ge n^c .
\]
Then every digraph \(D\in\mathcal G\) on \(n\) vertices has a quasi-kernel \(Q\)
such that
\[
    |N_D^+[Q]|\ge n^{(1+c)/2}.
\]
\end{theorem}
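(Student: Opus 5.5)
The plan is a case analysis on the maximum out-degree \(\Delta\) of \(D\), combined with induction on \(n\). The induction is legitimate because \(\mathcal G\) is hereditary, so every induced subdigraph of \(D\) is again in \(\mathcal G\) and satisfies both the hypothesis and the induction hypothesis. Put \(t=n^{(1-c)/2}\); the target is \(n^{(1+c)/2}=n/t\).

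\emph{Small maximum out-degree.} Suppose \(\Delta\) is at most about \(t\). Take the quasi-kernel \(Q\) supplied by the hypothesis, so \(|Q|\ge n^c\). Every vertex outside \(N_D^+[Q]\) is the head of an arc leaving some vertex of \(N_D^+(Q)\), so
\[
n\le |N_D^+[Q]|+\Delta\,|N_D^+(Q)|\le (1+\Delta)\,|N_D^+[Q]|.
\]
This gives \(|N_D^+[Q]|\ge n/(1+\Delta)\), which is about \(n/t=n^{(1+c)/2}\). The \(+1\) is a rounding issue: I would handle it by setting the threshold as \(\Delta+1\le t\), or by counting \(Q\) separately using \(|Q|\ge n^c\).

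\emph{Large maximum out-degree.} Suppose some vertex \(v\) has \(d^+(v)=\Delta>t\). I would run the Chv\'atal--Lov\'asz step. Let \(D'=D-N_D^+[v]\in\mathcal G\), and by induction take a quasi-kernel \(Q'\) of \(D'\) with
\[
|N_{D'}^+[Q']|\ge (n-\Delta-1)^{(1+c)/2}.
\]
\begin{itemize}
\item If no vertex of \(Q'\) sends an arc to \(v\), then \(Q=Q'\cup\{v\}\) is an independent quasi-kernel of \(D\). Its closed out-neighbourhood contains the disjoint union of \(N_{D'}^+[Q']\) and \(N_D^+[v]\). The bound then follows from
\[
(n-\Delta-1)^{(1+c)/2}+\Delta+1\ge n^{(1+c)/2},
\]
which holds by concavity because \((1+c)/2\le 1\).
\item If some \(q\in Q'\) has \(q\to v\), then \(Q'\) is already a quasi-kernel of \(D\). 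However, \(N_D^+[Q']\) may gain only the vertex \(v\) over \(N_{D'}^+[Q']\).
\end{itemize}

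The second subcase is the main obstacle: the induction loses about \(\Delta\cdot n^{(c-1)/2}\), which is tolerable only when \(\Delta\) is small, and that is precisely the regime already handled by counting. My first fix would be to use the hypothesis on \(D[N_D^+(v)]\). Since this digraph lies in \(\mathcal G\), it has a quasi-kernel \(R\) with \(|R|\ge\Delta^c\). I would then extend \(R\) to a quasi-kernel of \(D\) by applying induction to \(D-N_D^-[R]\) and \(D-N_D^+[R]\) and running a Chv\'atal--Lov\'asz-type merge. The aim is that the bad subcase forces a vertex of \(Q'\) with many out-neighbours, or that swapping \(q\) for \(v\) gains \(\Delta\). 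The delicate point is keeping the merged set independent while guaranteeing a gain of order \(\Delta\), or \(\Delta^c\) times the out-degree, in the closed out-neighbourhood. If the merge fails, I would try the other choice of \(v\), namely a vertex of maximum in-degree, so that the bad subcase deletes a large set and the induction absorbs the loss.
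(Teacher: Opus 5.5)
Your small-$\Delta$ count and your first subcase are correct. The proof still has a genuine gap: the subcase in which some $q\in Q'$ satisfies $q\to v$ is never closed. There you only obtain $|N_D^+[Q']|\ge (n-\Delta-1)^{(1+c)/2}+1$. By concavity, deleting $N_D^+[v]$ costs at least $\frac{1+c}{2}(\Delta+1)\,n^{(c-1)/2}$, so your bound is too weak as soon as $\Delta+1$ exceeds $\frac{2}{1+c}\,n^{(1-c)/2}$. For example, $\Delta\approx n/2$ leaves you near $(n/2)^{(1+c)/2}$. The proposed repairs are only sketched: a quasi-kernel $R$ of $D[N_D^+(v)]$, inductions on $D-N_D^-[R]$ and $D-N_D^+[R]$, swapping $q$ for $v$, or choosing $v$ of maximum in-degree. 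You yourself flag that keeping the merged set independent while gaining order $\Delta$ is unresolved.

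Note also that nothing in the parts that work uses $|Q|\ge n^c$. The count $n\le(1+\Delta)|N_D^+[Q]|$ holds for any quasi-kernel, and subcase~1 uses only induction. So the hypothesis must enter precisely in the open subcase. Otherwise, taking $c=1$, your argument would give $|N_D^+[Q]|\ge n$ for all digraphs, which already fails for a directed triangle.

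The missing idea is to delete the closed out-neighbourhood of the large quasi-kernel itself, not of a high-degree vertex; no case split on $\Delta$ is needed. Put $\alpha=(1+c)/2$ and take $Q$ with $|Q|\ge n^c$. If $|N_D^+[Q]|<n^\alpha$, then $Q_2=V(D)\setminus N_D^+[Q]$ has more than $n-n^\alpha$ vertices, and induction gives a quasi-kernel $Q_0$ of $D[Q_2]$. The set $Q_0\cup\bigl(Q\setminus N_D^+(Q_0)\bigr)$ is then a quasi-kernel of $D$. It is independent because no arc goes from $Q$ into $Q_2$. It 2-dominates because every vertex of $Q$ is either kept or dominated by $Q_0$, so $N_D^+(Q)$ stays within distance $2$. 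Its closed out-neighbourhood contains the disjoint sets $N_{D[Q_2]}^+[Q_0]$ and $Q$. Hence it has size at least $(n-n^\alpha)^\alpha+n^c\ge n^\alpha(1-n^{\alpha-1})+n^{2\alpha-1}=n^\alpha$, using $(1-x)^\alpha\ge 1-x$ and $c=2\alpha-1$. The gain $|Q|\ge n^c$ is exactly what pays for recursing on $n-n^\alpha$ vertices, a balance that deleting $N_D^+[v]$ cannot provide.
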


\begin{proof}
Set
\[
    \alpha=\frac{1+c}{2}.
\]
We prove the result by induction on \(n=|V(D)|\). The case \(n=1\) is trivial.
Assume the result holds for all digraphs in \(\mathcal G\) with fewer than
\(k\) vertices, and let \(D\in\mathcal G\) have \(k\) vertices.

By assumption, \(D\) has a quasi-kernel \(Q\) with
\[
    |Q|\ge k^c.
\]
Set
\[
    Q_1=N_D^+(Q), \qquad
    Q_2=V(D)\setminus (Q\cup Q_1),
\]
and write
\[
    q=|Q|,\qquad q_1=|Q_1|,\qquad q_2=|Q_2|.
\]
If
\[
    |N_D^+[Q]|=q+q_1\ge k^\alpha,
\]
then \(Q\) is the desired quasi-kernel. Hence we may assume
\[
    q+q_1<k^\alpha.
\]
Thus
\[
    q_2=k-q-q_1>k-k^\alpha.
\]

Since \(\mathcal G\) is hereditary, \(D[Q_2]\in\mathcal G\). By the induction
hypothesis, \(D[Q_2]\) has a quasi-kernel \(Q_0\) such that
\[
    |N_{D[Q_2]}^+[Q_0]|\ge q_2^\alpha.
\]

Define
\[
    Q'=Q_0\cup \bigl(Q\setminus N_D^+(Q_0)\bigr).
\]
We claim that \(Q'\) is a quasi-kernel of \(D\).

First, \(Q'\) is independent. Indeed, \(Q_0\) is independent in \(D[Q_2]\), and
\(Q\setminus N_D^+(Q_0)\) is independent because \(Q\) is independent. Moreover,
there is no arc from \(Q_0\) to \(Q\setminus N_D^+(Q_0)\) by construction, and
there is no arc from \(Q\) to \(Q_0\), since \(Q_0\subseteq Q_2\) and
\(Q_2\cap N_D^+(Q)=\varnothing\).

Now we check the 2-domination property. Every vertex of \(Q_2\) is reachable
from \(Q_0\) within directed distance at most 2. Every vertex of \(Q\) is
reachable from \(Q'\) within distance at most \(1\): if a vertex of \(Q\) is not
in \(Q'\), then it lies in \(N_D^+(Q_0)\). Finally, every vertex of
\(Q_1=N_D^+(Q)\) is reachable from \(Q'\) within distance at most 2, because
every vertex of \(Q\) is reachable from \(Q'\) within distance at most \(1\).
Hence \(Q'\) is a quasi-kernel of \(D\).

It remains to estimate its closed out-neighborhood. Since
\(N_{D[Q_2]}^+[Q_0]\subseteq Q_2\), \(Q\cap Q_2=\varnothing\), and
\(Q\subseteq N_D^+[Q']\), we have
\[
    |N_D^+[Q']|
    \ge |N_{D[Q_2]}^+[Q_0]|+|Q|
    \ge q_2^\alpha+q.
\]
Using \(q_2>k-k^\alpha\) and \(q\ge k^c\), we obtain
\[
    |N_D^+[Q']|
    > (k-k^\alpha)^\alpha+k^c.
\]
Since \(c=2\alpha-1\), we have
\[
    |N_D^+[Q']|>(k-k^\alpha)^\alpha+k^{2\alpha-1} = k^\alpha \left((1-k^{\alpha - 1})^\alpha + k ^{\alpha -1}\right).
\]
Moreover, as \(0\le \alpha\le 1\),
\[
    (1-x)^\alpha\ge 1-x
\]
for \(0\le x\le 1\). Taking \(x=k^{\alpha-1}\), we get
\[
    |N_D^+[Q']|> k^\alpha(1-k ^{\alpha -1}+k ^{\alpha -1}) = k^\alpha.
\]
This completes the induction.
\end{proof}
\begin{corollary}
Let \(D\) be a digraph on \(n\) vertices. Then \(D\) has a quasi-kernel \(Q\)
such that
\[
    |N_D^+[Q]|\ge \sqrt n .
\]
\end{corollary}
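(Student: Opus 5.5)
We apply Theorem~\ref{thm:amplification} with \(\mathcal G\) the class of all finite digraphs, which is trivially hereditary, and with \(c=0\). The conclusion of the theorem then reads \(|N_D^+[Q]|\ge n^{(1+0)/2}=\sqrt n\), which is exactly the corollary. So the only work is checking the hypothesis at \(c=0\).

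The hypothesis at \(c=0\) requires that every digraph \(D\) on \(n\) vertices have a quasi-kernel \(Q\) with \(|Q|\ge n^0=1\). For \(n\ge1\), Theorem~\ref{thm:CL} (Chv\'atal--Lov\'asz) gives a quasi-kernel \(Q\) of \(D\). This \(Q\) must be nonempty, because the empty set reaches no vertex, while \(V(D)\ne\varnothing\) must be covered. Hence \(|Q|\ge1\). (The empty digraph is vacuous, or can be excluded by taking \(n\ge1\) throughout, as the induction in Theorem~\ref{thm:amplification} does.)

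There is no real obstacle here. The one point to watch is that \(c=0\) lies in the allowed range \([0,1]\). The step \((1-x)^\alpha\ge 1-x\) in the proof of Theorem~\ref{thm:amplification} is then used with \(\alpha=1/2\), and the term \(k^{c}=k^{2\alpha-1}=1\) matches \(|Q|\ge1\), so the induction applies without change.
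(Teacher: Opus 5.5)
Your proposal is correct and is essentially the paper's proof: apply Theorem~\ref{thm:amplification} to the hereditary class of all digraphs with \(c=0\), using Theorem~\ref{thm:CL} to get a quasi-kernel of size at least \(1=n^0\). Your extra remarks, that the quasi-kernel is nonempty because \(V(D)\ne\varnothing\) and that the empty digraph should be excluded (where \(n^0=1\) would fail), make explicit a point the paper leaves implicit.
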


\begin{proof}
Apply Theorem~\ref{thm:amplification} to the hereditary class of all digraphs
with \(c=0\). By Theorem~\ref{thm:CL}, every digraph has a quasi-kernel, and hence has one of
size at least \(1=n^0\). Therefore there exists a quasi-kernel \(Q\) with
\[
    |N_D^+[Q]|\ge n^{(1+0)/2}=\sqrt n .
\]
\end{proof}

\section{Concluding remarks}

In this paper, we developed a weighted hereditary framework for studying quasi-kernels. Using a sinks reduction and a matching argument, we proved a \(2n/3\) bound for source-free digraphs in hereditary classes satisfying the weighted half-neighborhood property, and applied this to break digraphs. We also obtained \(n/2\) bounds for two special subclasses of break digraphs, and improved the general large quasi-kernel lower bound from \(n^{1/3}\) to \(\sqrt n\).

We conclude with several open questions.
A theorem of Chv\'atal and Lov\'asz states that every digraph has a quasi-kernel,
while the Small Quasi-kernel Conjecture predicts the existence of a quasi-kernel
of size at most \(n/2\). Our results suggest studying this conjecture for
digraphs admitting a partition
\[
V(D)=S\dot\cup T,
\]
where \(D[T]\) is a tournament and \(D[S]\) has a small out-kernel. The results of
Section~5 show that this case can be separated from the remaining difficulty.

Another natural hereditary class is given by digraphs whose underlying graph is
triangle-free. By Ramsey theory, every triangle-free graph has an independent
set of size at least \(\sqrt n\). It would be interesting to investigate both
the small and large quasi-kernel conjectures for this class.

Finally, our results provide evidence for the following conjecture.

\begin{conjecture}
Every break digraph on \(n\) vertices has a quasi-kernel of size at most
\(\frac n2\).
\end{conjecture}

\end{document}